\newcommand{\displayfigures}{true}
\newcommand{\R}{\mathbb{R}}
\newtheorem{conjecture}{Conjecture}
\newcommand{\Par}[1]{\left( #1 \right)}
\newcommand{\GramTwo}[2]{\mathrm{Gram}\vrow{#1}{#2}}
\newcommand{\GramFour}[4]{\mathrm{Gram}\begin{pmatrix} #1 & #2 & #3 & #4 \end{pmatrix}}
\newcommand{\Gramm}[4]{\mathrm{Gram}\comN{2}\left(#1,#2,#3,#4\right)}
\newcommand\vcol[2]{\begin{pmatrix} #1 \\ #2 \end{pmatrix}}
\newcommand\vrow[2]{\begin{pmatrix} #1 & #2 \end{pmatrix}}
\newcommand\vrowFour[4]{\begin{pmatrix} #1 & #2 & #3 & #4 \end{pmatrix}}
\newcommand{\vmat}[4]{\begin{pmatrix} #1 & #2 \\ #3 & #4 \end{pmatrix}}
\newcommand{\MS}[1]{ \{\hspace{-0.1cm}\{ #1 \}\hspace{-0.1cm}\}}
\newcommand\GM[1]{\mathcal{G}_{#1}}
\newcommand\GMM[1]{\mathcal{P}_{#1}}
\newcommand{\LL}[1]{\mathcal{L}_{#1}}
\newcommand{\Ltal}[1]{\mathcal{L}^{\mathrm{tall}}_{#1}}
\newcommand{\Lwide}[1]{\mathcal{L}^{\mathrm{wide}}_{#1}}
\newcommand\Utal{\mathcal{L}_{\{1\}}^{\mathrm{tall}}}
\newcommand{\SSS}[1]{\mathcal{S}_{#1}}
\newcommand{\figureH}[1]{%
    \ifthenelse{\equal{\displayfigures}{false}}
      {% False case
       \textcolor{blue}{\textit{hidden}}
      }
      {% True case
      #1
      }
}
\definecolor{DZCcolor}{HTML}{074799}
\definecolor{DZCcolorr}{HTML}{974799}
\newcommand{\comN}[1]{#1}
\newcommand{\comD}[1]{#1}
\newcommand{\addDJune}[2][]{#2} % \newcommand{\addDJune}[2][]{\chadded[id=D,comment={#1}]{#2}}
\newcommand{\comDJune}[2][]{#2} % \newcommand{\comDJune}[2][]{\addDJune[#1]{#2}}
\newcommand{\repDAug}[3][]{#2} % \newcommand{\repDAug}[3][]{\chreplaced[id=Da,comment={#1}]{#2}{#3}}
\newcommand{\repDOct}[3][]{\chreplaced[id=D,comment={#1}]{#2}{#3}}
\newcommand{\repDOctui}[3][]{#2}
\newcommand{\deleD}{\chdeleted[id=D]}
\newcommand{\addD}{\chadded[id=D]}%
\theoremstyle{thmstyleone}%
\newtheorem{theorem}{Theorem}[section]% meant for sectionwise numbers
\newtheorem{proposition}[theorem]{Proposition}% 
\newtheorem{lemma}[theorem]{Lemma}% 
\newtheorem{corollary}[theorem]{Corollary}% 
\theoremstyle{thmstyletwo}%
\newtheorem{remark}{Remark}%
\theoremstyle{thmstylethree}%
\newtheorem{definition}{Definition}[section]%
\begin{document}

\title[Article Title]{On the Convex Interpolation for Linear Operators}

%%=============================================================%%
%% GivenName	-> \fnm{Joergen W.}
%% Particle	-> \spfx{van der} -> surname prefix
%% FamilyName	-> \sur{Ploeg}
%% Suffix	-> \sfx{IV}
%% \author*[1,2]{\fnm{Joergen W.} \spfx{van der} \sur{Ploeg} 
%%  \sfx{IV}}\email{iauthor@gmail.com}
%%=============================================================%%

\author*[1]{\fnm{Nizar} \sur{Bousselmi}}\email{nizar.bousselmi@uclouvain.be}
\equalcont{These authors contributed equally to this work.}

\author*[2]{\fnm{Zhicheng} \sur{Deng}}\email{dengzhch@shanghaitech.edu.cn}
\equalcont{These authors contributed equally to this work.}

\author[2]{\fnm{Jie} \sur{Lu}}\email{lujie@shanghaitech.edu.cn}

\author[1,3]{\fnm{François} \sur{Glineur}}\email{francois.glineur@uclouvain.be}

\author[1]{\fnm{Julien M.} \sur{Hendrickx}}\email{julien.hendrickx@uclouvain.be}

\affil[1]{\orgdiv{ICTEAM}, \orgname{UCLouvain}, \orgaddress{\city{Louvain-la-Neuve}, \country{Belgium}}}

\affil[2]{\orgdiv{School of Information Science and Technology}, \orgname{Shanghaitech University}, \orgaddress{\city{Shanghai}, \country{P. R. China}}}

\affil[3]{\orgdiv{CORE}, \orgname{UCLouvain}, \orgaddress{\city{Louvain-la-Neuve}, \country{Belgium}}}

%%==================================%%
%% Sample for unstructured abstract %%
%%==================================%%

\abstract{%
\comD{The worst-case performance of an optimization method on a problem class \comN{can be} analyzed using a finite description of the problem class, {known as} interpolation conditions.} In this work, we study interpolation conditions for linear operators given scalar products between discrete inputs and outputs.
First, we show that if only convex constraints on the scalar products of inputs and outputs are allowed, it is only possible to characterize classes of linear operators or symmetric linear operator whose all singular values or eigenvalues belong to some subset of $\R$. Then, we propose new interpolation conditions for linear operators with minimal and maximal singular values and linear operators whose eigenvalues or singular values belong to unions of subsets. Finally,  we illustrate the new interpolation conditions through the analysis of the Gradient and Chambolle-Pock methods. It allows to obtain new numerical worst-case guarantees on these methods.
}

\keywords{Linear operator, Convex interpolation, Worst-case analysis, Performance estimation problem}

\maketitle

%%==================================%%
%% 1. Introduction                  %%
%%==================================%%
\section{Introduction}\label{sect:intro}
\comN{One recent way to analyze} the worst-case performance of a given optimization method on a given function class is the well-developed Performance Estimation Problem (PEP) framework \cite{drori2014performance,taylor2017smooth}. It formulates the problem of computing the worst-case performance of the method on the class as an optimization problem and then solves it.
The framework relies on interpolation conditions of the function classes considered \cite{rubbens2024constraint,goujaud2023fundamental} \comN{(such interpolation conditions can serve larger purposes)}. Worst-case guarantees are built up by combining inequalities between specific points of the domain characterizing the function class. In some sense, interpolation conditions of a function class are the best description of this class.

Analyzing the performance of optimization methods on problems of the form $\min_x g(Mx)$ or $\min_x f(x) + g(Mx)$ where $f\in \mathcal{F}$, $g\in \mathcal{G}$, and $M \in \mathcal{M}$ with PEP requires interpolation conditions for the function classes $\mathcal{F}$ and $\mathcal{G}$ and linear operator class $\mathcal{M}$ (see \cite{taylor2017smooth,bousselmi2024interpolation} for more details). We will focus on the interpolation of linear operators defined as follows.
\begin{definition}[Operator interpolability]\label{def:R_mat_inter1}
Given a class of linear operators $\mathcal{M}$, the sets of pairs $\{(x_i,y_i)\}_{i\in [N_1]}$ and $\{(u_j,v_j)\}_{j\in [N_2]}$ are $\mathcal{M}$-interpolable if
%, and only if,\todo{should we use `if' rather than `if and only if'?}
\begin{equation}
\exists M\in \mathcal{M}~:
\begin{cases}
y_i = M x_i,   & \forall i\in [N_1] \\
v_j = M^T u_j, & \forall j\in [N_2]
\end{cases}
\end{equation}
\end{definition}
\repDOct[%JH: maybe adding somewhere a comment along the following lines 
% "while for given x, y, this question can often easily be solved by computing M explicitly, the PEP methodology requires an actual description of the whose sets of x,y,u,v for which this holds".\\
% This is to deflect criticism that one could just solve the system
]{%
While given sets of points $\{(x_i,y_i)\}_{i\in [N_1]}$ and $\{(u_j,v_j)\}_{j\in [N_2]}$, this question can often easily be solved by computing $M$ explicitly, the PEP methodology requires an explicit description of the whose sets of points for which the desired interpolable property holds.}{}
Interpolation conditions are \comN{necessary and sufficient} constraints on the set of points $\{(x_i,y_i)\}_{i\in [N_1]}$ and $\{(u_j,v_j)\}_{j\in [N_2]}$ guaranteeing their interpolability. A previous work \cite{bousselmi2024interpolation}, developed interpolation conditions for the classes of linear operators with singular values in $[0,L]$, $\LL{[0,L]}$, and for symmetric linear operators with eigenvalues in $[\mu,L]$, $\mathcal{S}_{[\mu,L]}$, with $\mu \leq L$. Formally, given subset $S\subseteq \R$,
we denote
\begin{align}
\LL{S}        & = \{ M : \sigma_{i}(M) \in S,~\forall i \} \label{eq:sigma_in_S}                        \\
\mathcal{S}_S & = \{ M \text{ is symmetric }: \lambda_{i}(M) \in S,~\forall i \} \label{eq:lambda_in_S} %\\
\end{align}
where $\sigma_{i}(M)$ and $\lambda_i(M)$ are the singular values and eigenvalues of $M$.

Given the sets of pairs $\{(x_i,y_i)\}_{i\in [N_1]}$ and $\{(u_j,v_j)\}_{j\in [N_2]}$ where $x_i, u_j \in \R^n , v_j, y_i \in \R^m$, we denote
$X = (x_1~\cdots~x_{N_1})$, $Y = (y_1 ~\cdots ~y_{N_1})$, $U = (u_1 ~\cdots ~u_{N_2})$, $V = (v_1 ~\cdots ~v_{N_2})$.
Interpolation conditions for $\LL{[0,L]}$ and $\mathcal{S}_{[\mu,L]}$-interpolability are the following \cite{bousselmi2024interpolation} \comN{(there also exist interpolation conditions for skew-symmetric linear operators \cite[Corollary 3.2]{bousselmi2024interpolation})}.
\begin{theorem}[{$\LL{[0,L]}$}-interpolation conditions, Theorem 3.1 of \cite{bousselmi2024interpolation}]\label{th:int_cond_non_sym}
\comN{Given $L\geq 0$.} The sets of pairs $\{(x_i,y_i)\}_{i\in [N_1]}$ and $\{(u_j,v_j)\}_{j\in [N_2]}$ are {$\LL{[0,L]}$}-interpolable if, and only if,
\begin{align} \label{cond:non_symm}
\begin{split}
\begin{cases}
X^T V = Y^TU,          \\
Y^TY \preceq L^2 X^TX, \\
V^TV \preceq L^2 \: U^TU.
\end{cases}
\end{split}
\end{align}
\end{theorem}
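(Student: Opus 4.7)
The plan is to prove necessity by direct computation and sufficiency by an explicit block construction that invokes Parrott's extension theorem.

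For necessity, given any $M\in \LL{[0,L]}$ with $MX=Y$ and $M^T U=V$, the identity $X^T V=X^T M^T U=(MX)^T U=Y^T U$ is immediate, and the operator-norm bound $\|M\|\le L$ yields $M^T M\preceq L^2 I$ and $MM^T\preceq L^2 I$, from which $Y^T Y=X^T M^T M X\preceq L^2 X^T X$ and $V^T V\preceq L^2 U^T U$ follow.

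For sufficiency, the inequality $Y^T Y\preceq L^2 X^T X$ implies $\ker X\subseteq \ker Y$, so the assignment $x_i\mapsto y_i$ extends uniquely and linearly to an operator on $\mathrm{Range}(X)$ of norm at most $L$, thereby prescribing the action of $M$ on $\mathrm{Range}(X)$. Symmetrically, $V^T V\preceq L^2 U^T U$ prescribes the action of $M^T$ on $\mathrm{Range}(U)$ with norm at most $L$. Choose orthonormal bases $P_1$ of $\mathrm{Range}(X)$ and $Q_1$ of $\mathrm{Range}(U)$, and complete them to full orthonormal bases of $\R^n$ and $\R^m$; in these bases $M$ acquires a $2\times 2$ block form in which the first block column (the action on $\mathrm{Range}(X)$) and the first block row (the action of $M^T$ on $\mathrm{Range}(U)$) are determined, while the $(2,2)$ block is free. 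A short computation using pseudoinverses of the factorizations $X=P_1 R$ and $U=Q_1 S$ shows that the two specifications of the shared $(1,1)$ block coincide precisely when $X^T V=Y^T U$, so the three data conditions jointly produce a consistent assignment of three of the four blocks.

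The main obstacle is then to fill in the remaining free block so that the completed operator still has norm at most $L$. This is exactly the content of Parrott's extension theorem: the infimum of the operator norm over the unknown $(2,2)$ block equals the maximum of the norms of the prescribed block row and the prescribed block column, and both of these norms are at most $L$ by the preceding step. Applying Parrott's theorem therefore delivers an $M\in \LL{[0,L]}$ realizing the data, which completes the proof.
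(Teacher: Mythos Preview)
The paper does not supply its own proof of this statement; it is quoted verbatim as Theorem~3.1 of \cite{bousselmi2024interpolation}. Your argument is correct: necessity is immediate, and for sufficiency the three hypotheses determine, consistently, three of the four blocks of $M$ with respect to the decompositions $\R^n=\mathrm{Range}(X)\oplus\mathrm{Range}(X)^\perp$ and $\R^m=\mathrm{Range}(U)\oplus\mathrm{Range}(U)^\perp$, with both the prescribed block column and the prescribed block row of operator norm at most $L$; Parrott's theorem then furnishes the missing $(2,2)$ block without increasing the norm. The one point worth spelling out a bit more is the consistency check on the shared $(1,1)$ block: writing $X=P_1R$ and $U=Q_1S$ with $R,S$ of full row rank, the condition $MP_1R=Y$ forces $S^T M_{11}R=S^T Q_1^T Y=U^T Y$, while $M^T Q_1S=V$ forces $S^T M_{11}R=V^T P_1 R=V^T X$, and these agree exactly when $X^T V=Y^T U$; since $R$ and $S$ have right inverses, this equality of products is equivalent to equality of the two candidate $(1,1)$ blocks.

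By way of comparison with the paper's own techniques, the polar-decomposition route of Theorem~\ref{thm:L_muL_interpolation_conditon} specialised to $\mu=0$ would also give a proof: write $M=RQ$ with $R$ isometric and $Q\in\SSS{[0,L]}$ and combine Proposition~\ref{th:int_cond_isometric} with Theorem~\ref{th:int_cond_sym}. That approach, however, introduces the auxiliary variables $Z,W$ of~\eqref{cond:Ltal}, and one would still have to eliminate them to recover the clean closed form~\eqref{cond:non_symm}. Your Parrott-based construction reaches that form directly and avoids the detour.
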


\begin{theorem}[{$\mathcal{S}_{[\mu,L]}$}-interpolation conditions, Theorem 3.3 of \cite{bousselmi2024interpolation}]\label{th:int_cond_sym}
Given $\mu\leq L$. The set $\{(x_i,y_i)\}_{i\in [N_1]}$ is {$\SSS{[\mu,L]}$}-interpolable if, and only if,
\begin{align} \label{cond:symm}
\begin{split}
\begin{cases}
X^T Y = Y^T X, \\
(Y-\mu X)^T(LX-Y) \succeq 0.
\end{cases}
\end{split}
\end{align}
\end{theorem}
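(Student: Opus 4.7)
The proof splits into necessity (easy) and sufficiency (the substance).

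For necessity, if a symmetric $M$ with spectrum in $[\mu, L]$ satisfies $MX = Y$, then $X^TY = X^TMX$ is automatically symmetric, while $M - \mu I$ and $LI - M$ are two commuting symmetric positive semidefinite matrices (both are polynomials in $M$), so their product is also positive semidefinite, giving $(Y - \mu X)^T(LX - Y) = X^T(M - \mu I)(LI - M)X \succeq 0$.

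For sufficiency, my plan is to first reduce to the canonical case $\mu = 0$, $L = 1$ via the affine rescaling $M = \mu I + (L - \mu)M'$ and $Y' = (Y - \mu X)/(L - \mu)$ (the edge case $\mu = L$ forces $Y = \mu X$, so $M = \mu I$ works trivially). Under this normalization the hypotheses become $X^TY' = Y'^TX$ and $Y'^TY' \preceq X^TY'$, and the task is to construct symmetric $M'$ with spectrum in $[0,1]$ such that $M'X = Y'$. A preliminary chain to derive is $0 \preceq Y'^TY' \preceq X^TY' \preceq X^TX$; the last inequality follows by expanding $(X - Y')^T(X - Y') \succeq 0$ and combining with the hypothesis.

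The construction proceeds block by block in an orthonormal basis adapted to $\mathrm{Range}(X)$: let $U_X$ be an orthonormal basis of $\mathrm{Range}(X)$ with $U_X^\perp$ spanning its orthogonal complement, so that $X = U_X X_1$ with $X_1$ of full row rank and $Y' = U_X Y_1' + U_X^\perp Y_2'$. The implication $X_1 c = 0 \Rightarrow Y'c = 0$ (which follows from $Y'^TY' \preceq X^TY'$), combined with surjectivity of $X_1$, uniquely determines $M'_{11}$ and $M'_{21}$ via $M'_{11} X_1 = Y_1'$ and $M'_{21} X_1 = Y_2'$. The identity $X_1^T M'_{11} X_1 = X^TY'$ forces $M'_{11}$ symmetric, and the chain $0 \preceq X^TY' \preceq X^TX$ is equivalent (by surjectivity of $X_1$) to $0 \preceq M'_{11} \preceq I$.

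The free block $M'_{22}$ must be chosen so that $0 \preceq M' \preceq I$ holds globally. By the generalized Schur complement criterion, a symmetric $M'_{22}$ making both $M' \succeq 0$ and $I - M' \succeq 0$ exists if and only if $M'_{12}{}^T[M'_{11}{}^+ + (I - M'_{11})^+]M'_{12} \preceq I$ together with the range inclusions $\mathrm{Range}(M'_{12}) \subseteq \mathrm{Range}(M'_{11}) \cap \mathrm{Range}(I - M'_{11})$. The matrix inequality is equivalent (again by surjectivity of $X_1$) to $Y_2'^TY_2' \preceq X^TY' - Y_1'^TY_1'$, which rearranges to the hypothesis $Y'^TY' \preceq X^TY'$. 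The range inclusions amount to $\mathrm{Null}(M'_{11}) \cup \mathrm{Null}(I - M'_{11}) \subseteq \mathrm{Null}(M'_{21})$, which follows from the positive semidefiniteness of $X^TY'$ and $X^T(X - Y')$: a vector on which such a quadratic form vanishes is annihilated by the corresponding matrix, and tracing through the definitions then yields $Y_2'c = 0$. I anticipate this boundary analysis (at eigenvalues $0$ or $1$ of $M'_{11}$) to be the main technical obstacle; once handled, any admissible $M'_{22}$ completes the construction and $M = \mu I + (L - \mu)M'$ is the desired symmetric interpolating operator.
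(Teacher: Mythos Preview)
This theorem is not proved in the present paper: it is quoted from \cite[Theorem~3.3]{bousselmi2024interpolation} as a known background result in the introduction, and no argument for it is supplied here. There is therefore nothing in this paper to compare your proposal against.

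That said, your argument stands on its own as a correct proof. Necessity is routine. For sufficiency, the affine reduction to $(\mu,L)=(0,1)$ followed by a block construction of $M'$ in a basis adapted to $\mathrm{Range}(X)$ works as you outline: under the range inclusions, the Schur-complement feasibility condition $M'_{21}\bigl[M'_{11}{}^+ + (I-M'_{11})^+\bigr]M'_{21}{}^T \preceq I$ is equivalent (via the block matrix $\left(\begin{smallmatrix} M'_{11}(I-M'_{11}) & M'_{21}{}^T \\ M'_{21} & I \end{smallmatrix}\right)\succeq 0$) to $M'_{21}{}^T M'_{21} \preceq M'_{11}(I-M'_{11})$, and by surjectivity of $X_1$ this unwinds to $Y_2'^T Y_2' \preceq X^T Y' - Y_1'^T Y_1'$, i.e., exactly the hypothesis $Y'^T Y' \preceq X^T Y'$. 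The range inclusions $\mathrm{Null}(M'_{11}) \cup \mathrm{Null}(I-M'_{11}) \subseteq \mathrm{Null}(M'_{21})$ that you flag as the delicate point do follow from $Y'^T Y' \preceq X^T Y'$ and its companion $(X-Y')^T(X-Y') \preceq X^T(X-Y')$, by the mechanism you sketch (a vector on which the PSD form vanishes lies in the kernel). No gap.
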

Thanks to these interpolation conditions and PEP, new worst-case guarantees were obtained on the Gradient and Chambolle-Pock methods \cite[Section 4]{bousselmi2024interpolation}, Condat-V\~u methods \cite{bousselmi2024comparison}, decentralized optimization \cite{colla2024computer}, min-max optimization \cite{vasin2025solving,krivchenko2024convex} and parametric quadratic optimization \cite{ranjan2024verification}.

\paragraph{Worst-case analysis and Gram representation}
Both classical worst-case analysis of optimization methods and PEP require algebraic characterizations of function classes \cite{nesterov2018lectures,rubbens2023interpolation}, to obtain \repDOct[]{guaranteed}{} tight and unimprovable bounds.\repDOctui[JH: 1. it's a bit strange to explain this once we already introduced the interpolation conditions before. It seems a bit redundant with the explanation prior to the interpolation conditions. 2. this comment is a bit too strong. There are case were non-tight conditions work. The real statement is that for any setting, if we do not use interpolation conditions, there is a problem for which we won't obtain a tight bound. However this is too complex for this introduction.
So maybe we should just say "to obtained guaranteed tight and unimprovable bounds"]{}{, we must use necessary and sufficient interpolation conditions.}
In this work, we focus on the PEP methodology. It appears that in a lot of cases, the problem of computing the worst-case performance of a given method on a given class of functions can be formulated as a convex problem \cite{taylor2017smooth}. A necessary condition for the PEP to be efficiently exploitable is that the interpolation conditions must be convex on the scalar products between points. We discuss interpolation conditions for linear operators. This convexity must hold on the elements of the following pair of Gram matrices when considering classes of general linear operators:
\begin{equation}\label{eq:notation_Gram}
  (G,H) = \Par{\GramTwo{X}{V}, \GramTwo{Y}{U}} \triangleq \Par{\vrow{X}{V}^{T} \vrow{X}{V}, \vrow{Y}{U}^{T} \vrow{Y}{U}}
\end{equation}
and the following Gram matrix when considering classes of (square) symmetric linear operators
\begin{equation} \label{eq:notation_Gramsquare}
G = \GramFour XYUV \triangleq \begin{pmatrix} X & Y & U & V\end{pmatrix}^{T} \begin{pmatrix} X & Y & U & V\end{pmatrix}
\end{equation}
\repDOct[]{\textit{i.e.}, all the scalar products whose existence is compatible with the dimensions.}{}
For instance, Theorem \ref{th:int_cond_non_sym} (resp.\@ \ref{th:int_cond_sym}) can be written as convex semi-definite constraints on $G$ and $H$ (resp.\@ $G$).

In this work, we focus on interpolation conditions which are convex on the above Gram matrices $G$ and $H$.
We tackle the following two questions related to operator interpolability:
\begin{itemize}
 \item[1.] Which classes of linear operators can be represented by convex interpolation conditions on \comN{products between the points of the sets} $\{(x_i,y_i)\}_{i\in [N_1]}$ and $\{(u_j,v_j)\}_{j\in [N_2]}$?
 \item[2.] Is there a tractable form of interpolation conditions for $\LL{[\mu,L]}$ when $\mu > 0$?
\end{itemize}
We here propose answers to these questions.

\paragraph{Contributions and outline of the paper}
In Section \ref{sect:limitation}, we show that \comDJune{when restricted to convex constraints on the Gram matrices of the inputs and outputs,} we can only characterize classes of linear and symmetric linear operators of the forms \eqref{eq:sigma_in_S} and \eqref{eq:lambda_in_S} (i.e., classes of linear operators for which the singular values or eigenvalues belong to a given set)  (Theorem \ref{th:limit}).
For example, it is impossible to characterize the class of symmetric linear operators for which the sum of singular values is equal to a given number \comN{with convex constraints on the scalar products}.
In Section \ref{sect:interpolation}, we present interpolation conditions for the class of linear operators $\LL{[\mu,L]}$ (Theorem \ref{th:int_cond_non_sym}). To obtain them, we rely on the polar decomposition that allows to express any linear operator as a product of a unitary and a symmetric linear operator for which we already knew interpolation conditions. In Section \ref{sect:union}, we propose interpolation conditions for linear operators whose singular values or eigenvalues belong to a union of subsets, \addDJune{extending thus Theorem~\ref{th:int_cond_sym} and \ref{th:int_cond_non_sym}, and approaching the limitations derived in Section~\ref{sect:limitation}}. In Section \ref{sect:applications}, we illustrate the application of our new interpolation conditions to analyze the worst-case behavior of Gradient and Chambolle-Pock methods \cite{chambolle2011first} with the PEP framework. We provide new numerical worst-case guarantees.

\paragraph{Notations and conventions}
\repDOct[]{%
In this paper, we denote by $\mathcal{L}$ and $\mathcal{S}$ the sets of all linear operators and symmetric linear operators of any \comN{finite} dimensions. For a nonnegative integer $N$, let $[N]$ denote the set $\{1, \ldots, N\}$.
% \repDAug{In this paper, the result are stated both for the real and complex vector space. Note that real symmetric matrices are normal matrices in the real vector space and the conjugate transpose is just transpose.}{}
We adopt the following conventions. Let $X,Y,U,V$ be matrices with columns $x_i,y_i,u_j,v_j$ respectively. In the general (rectangular) case, $x_i, v_i$ lie in one space and $y_i, u_i$ lie in a different space, therefore, we have access to the pair of Gram matrices \eqref{eq:notation_Gram}.
In the (square) symmetric case, all vectors $x_i,y_i,u_j,v_j$ lie in the same space, therefore, we have access to the Gram matrix \eqref{eq:notation_Gramsquare}.
}{}

%%=========================================================%%
%% 2. Limitation of representation of linear operators ... %%
%%=========================================================%%
\section{Limitation of representation of linear operators through Gram matrices} \label{sect:limitation}
In this section, we present a limitation \addDJune{of representation} on classes of linear operators \comDJune{when restricted to convex constraints on the Gram matrices of inputs and outputs} \repDOct[]{of the linear operators.}{}
% can be represented in a convex way.
% \delDJune{Most of the existing PEP framework works with convex constraints on the scalar products between the points or equivalently on the Gram matrices to be described.}
% \delDJune[Deleted according to the comments]{%
% We show that through convex constraints on the Gram matrices, we can only represent classes of linear operators and symmetric linear operators whose all singular values or eigenvalues belong to a subset of $\R$ i.e., classes of the form of \eqref{eq:sigma_in_S} or \eqref{eq:lambda_in_S}. Therefore, analyzing optimization methods with convex formulations of PEP, through Gram matrices, is only possible for these classes of linear operators.
% }
% \delDJune[Duplicated with the first paragraph of Section 2.2]{%
% More precisely, we show that spaces of Gram matrices associated with linear operators whose all singular values or eigenvalues belong to a subset of $\R$ are always convex (Theorem \ref{lem:cone}). Moreover, \addDJune{the converse is also true:} if a set of Gram matrices associated with a set of (resp.\@ symmetric) linear operators is convex, then it is possible to describe this set of (resp.\@ symmetric) linear operators as linear operators with singular values (resp.\@ eigenvalues) in a subset of $\R$ (Theorem~\ref{th:limit}).
% }
\addDJune{We first introduce some key concepts in the Gram matrices associated with the linear operators in Section~\ref{ssec:gram_matrx}, followed by our main result on the limitation of Gram representation in Section~\ref{ssec:lim_main_result}. The relationship between the Gram representation and the interpolation condition is clarified in Section~\ref{ssec:gram_inter}.}

\subsection{Gram matrices associated with classes of linear operators}
\label{ssec:gram_matrx}
\repDOctui[Removed according to Julien's comment]{}{We define the concept of Gram matrices associated with a linear operator and with a class of linear operators.}

\begin{definition}[Gram matrix associated with a linear operator]\label{def:associated}
Given a matrix $G$ (resp.\@ a pair of Gram matrices $(G,H)$), we say that $G$ (resp.\@ $(G,H)$) is associated with $M$ if
\begin{equation}
\exists X,Y,U,V :
\begin{cases}
(G,H) = \Par{\GramTwo XV, \GramTwo YU} \text{ (resp.\@ } \GramFour XYUV) \\
Y = MX                                                                   \\
V = M^{T} U.
\end{cases}
\end{equation}
\end{definition}
\begin{definition}[\comN{Set of Gram matrices associated with a class of linear operator}]
Let classes of linear operators $\mathcal{M}\subseteq \mathcal{L}$ and square linear operators $\mathcal{Q} \subseteq \mathcal{S}$, the set of Gram matrices associated with $\mathcal{M}$ and $\mathcal{Q}$ are
\begin{align*}
\GMM{\mathcal{M}}^N & \triangleq \{ (G,H) = \Par{\GramTwo{X}{V}, \GramTwo{Y}{U}} & : \exists  M \in \mathcal{M}:  Y=MX,~V=M^{T}U  \}, \\
\GM{\mathcal{Q}}^N  & \triangleq \{ G = \GramFour XYUV                           & : \exists M \in \mathcal{Q}: Y=MX,~V=M^{T} U \}.
\end{align*}
All the Gram matrices of a set have the same dimension, i.e., $(G,H) \in \R^{2N \times 2N} \times \R^{2N \times 2N}$ (resp.\@ $G\in \R^{4N \times 4N}$). We often omit the superscript $N$ in the notations $\GMM{\mathcal{M}}^N$ and $\GM{\mathcal{Q}}^N$.

\end{definition}

A central property of Gram matrices is their unitary invariance, namely, rotating the vectors composing the Gram matrices does not modify them. Formally, given any set of vectors $A=(a_1~\cdots~a_n)$, we have
\begin{equation}
\label{eq:unit_invar}
\mathrm{Gram}(A) = \mathrm{Gram}(RA)
\end{equation}
for all unitary transformations $R$. Therefore,
Gram matrices associated with a linear operator (resp.\@ symmetric linear operator) $M$ are also associated with all linear operators of the form $RMS^{T}$ (resp.\@ $ R^{T} M R$).
\begin{lemma}[Unitary invariance of Gram matrices]\label{lem:unitary}
 Let $R$ and $S$ be any unitary transformations.
\begin{itemize}
% \item[1.] If a Gram matrix $G$ is associated with a symmetric linear operator $M$  then it is also associated with all linear operators $R M R^{T}$ {for any isometric transformation $R$.}
% \item[2.] If a pair of Gram matrices $(G,H)$ is associated with a linear operator $M$ then it is also associated with all linear operators $RMS^{T}$ {for any isometric transformations $R$ and $S$.}
 \item[1.] If a Gram matrix $G$ is associated with a symmetric linear operator $M$ then it is also associated with any linear operator $R M R^{T}$.
 \label{lem:unitary_symmetric}
 \item[2.] \label{lem:unitary_non_symmetric}
       %\chadded[id=D,comment={The equivalence relation is important. Otherwise, additional zero eigenvalues or singular values can be added.}]
       If a pair of Gram matrices $(G,H)$ is associated with a linear operator $M$ then $(G,H)$ is associated with any linear operator $RMS^{T}$.
       % for any unitary transformations $R$ and $S$.
\end{itemize}
\begin{proof}
For statement 1, we have
\begin{align}\label{gram_io_diff}
G & = \GramFour{X}{Y}{U}{V}  \overset{\eqref{eq:unit_invar}}{=} \mathrm{Gram}(R \begin{pmatrix} X & Y & U & V \end{pmatrix})  = \GramFour{X'}{Y'}{U'}{V'}
\end{align}
where $Y'= R M R^{T} X'$, $V' = R^{T} M^{T} R U'$ for any unitary transformations $R$.
And for statement 2, we have
\begin{equation}
\begin{cases}
G = \GramTwo{X}{V} \overset{\eqref{eq:unit_invar}}{=} \mathrm{Gram}\left(S \begin{pmatrix} X &  V \end{pmatrix}\right)  = \GramTwo{X'}{V'} \\
H = \GramTwo{Y}{U} \overset{\eqref{eq:unit_invar}}{=} \mathrm{Gram}\left(R \begin{pmatrix} Y &  U \end{pmatrix}\right)  = \GramTwo{Y'}{U'}
\end{cases}
\end{equation}
where $Y'= R M S^{T} X'$, $V' = S M^{T} R^{T} U'$ for any unitary transformations $R$ and $S$.
% \repDJune{}{Since the inverse of a unitary matrix is its transpose, statements 1 and 2 imply statements 3 and 4.}
\end{proof}
\end{lemma}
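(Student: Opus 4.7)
The plan is to exploit the unitary invariance property recorded in equation \eqref{eq:unit_invar}: applying a single unitary transformation to all columns of a matrix leaves its Gram unchanged. Given a Gram matrix (or a pair) associated with $M$ through some witnesses $X, Y, U, V$, I would construct witnesses for association with $RMR^T$ (respectively $RMS^T$) simply by rotating $X, Y, U, V$ appropriately, then verify that the rotated vectors (i) produce the same Gram data and (ii) satisfy the required operator equations with the rotated operator. The choice of which rotations to apply is dictated entirely by how the vectors are grouped inside the Gram matrices of \eqref{eq:notation_Gram}–\eqref{eq:notation_Gramsquare}.

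For statement 1, all four blocks $X, Y, U, V$ are concatenated into the single Gram matrix \eqref{eq:notation_Gramsquare}, so I would apply a common rotation $R$ to all of them, setting $X' = RX$, $Y' = RY$, $U' = RU$, $V' = RV$. Unitary invariance immediately preserves the Gram, and inserting $R^T R = I$ into $Y = MX$ rewrites the relation as $Y' = (RMR^T) X'$. The computation for $V'$ is analogous; symmetry of $M$ is used to ensure $(RMR^T)^T = RMR^T$, so that the rotated vectors genuinely realize an association with the symmetric operator $RMR^T$.

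For statement 2, the two Gram matrices in \eqref{eq:notation_Gram} use disjoint groupings: $X, V$ populate $G$ while $Y, U$ populate $H$. This naturally suggests two independent rotations, $S$ acting on $(X, V)$ and $R$ acting on $(Y, U)$. Each Gram is then preserved separately by \eqref{eq:unit_invar}, and inserting $S^T S = I$ and $R^T R = I$ converts $Y = MX$ and $V = M^T U$ into $Y' = (RMS^T) X'$ and $V' = (RMS^T)^T U'$, as required for association with $RMS^T$.

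There is essentially no mathematical obstacle here: the only manipulation beyond rewriting definitions is inserting an identity factored as $R^T R$ or $S^T S$, which is exactly unitarity. The interest of the lemma is conceptual rather than technical — it provides the invariance that will constrain which classes of operators can be captured by convex Gram-based conditions, and thus underlies the limitation result established in the following subsection.
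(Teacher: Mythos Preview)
Your proposal is correct and follows essentially the same approach as the paper: define the rotated witnesses $X' = RX$, $Y' = RY$, $U' = RU$, $V' = RV$ (respectively $X' = SX$, $V' = SV$, $Y' = RY$, $U' = RU$), invoke \eqref{eq:unit_invar} to preserve the Gram data, and insert $R^TR = I$, $S^TS = I$ to rewrite the operator relations with $RMR^T$ (respectively $RMS^T$). Your write-up is in fact slightly more explicit than the paper's, and your observation that symmetry of $M$ is needed so that $RMR^T$ remains symmetric is a detail the paper leaves implicit.
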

\begin{comment}
\begin{lemma}[Unitary invariance of pairs of Gram matrices]\label{lem:unitary2}
Let a pair of Gram matrices $(G,H)=\Par{\GramTwo{X}{Y}, \GramTwo{U}{V}}$. If $Y=MX$, $V = M^{T} U$, then
\begin{equation}
(G,H) = \Par{\GramTwo{X'}{Y'}, \GramTwo{U'}{V'}}
\end{equation}
where $Y'= R M S^{T} X'$, $V' = S M^{T} R^{T} U'$ for all unitary transformations $R$ and $S$.
\begin{proof}
Choosing $((X', V'),(Y',U'))=(SX,SV),(RY,RU)$ yields $Y' = RY = RMX = RMS^{T} X'$ and $V'= SV = SM^{T} U = SM^{T} R^{T} U'$.
\end{proof}
\end{lemma}
\begin{lemma}[Unitary invariance of Gram matrices]\label{lem:unitary}
Let a Gram matrix $G=\GramFour{X}{Y}{U}{V}$. If $Y=QX$, $V=Q^{T} U$, then
\begin{equation}
G = \GramFour{X'}{Y'}{U'}{V'}
\end{equation}
where $Y'= R Q R^{T} X'$, $V' = R^{T} Q^{T} R U'$ for all unitary transformations $R$.
\begin{proof}
Choosing $(X',Y',U',V') = R(X,Y,U,V)$ yields $Y'=RY=RQX=RQR^{T}X$ and $V'=RV=RQ^{T}U=RQ^{T}R^{T}X$.
\end{proof}
\end{lemma}
\end{comment}
% \todo[inline]{Zhicheng: I find that only normal matrix admit orthogonal diagonalization $RMR^{T}$. Symmetric and unitary matrices are subclasses of normal operator, they are studied in the previous paper.}
Therefore,
%\addD{we should consider singular value decomposition for general linear operators and orthogonal diagonalization for symmetric linear operators. In the rest of the paper, we only consider normal linear operators in the square case.}
Gram matrices associated with a general (resp.\@ symmetric) linear operator $M$ are also associated with the diagonalized version $M' = R M S^{T}$ (resp.\@ $ M' = RMR^{T}$) whose diagonal elements are the singular values of $M$ (resp.\@ {eigenvalues} of {symmetric linear operator} $M$) in any order.
This implies that \comN{the Gram representation only allows} characterizing the spectrum of linear operators $M$, namely, the values and multiplicity of each of their singular values (or eigenvalues for symmetric linear operators), in a sense that will be made clear in Corollary \ref{cor:spectr_chara}.
%\chcomment[id=D]{Important subclasses of normal matrices like symmetric matrices, skew symmetric matrices, unitary matrices are considered in the PEP context. Can general normal matrices be characterized in PEP?}

\comN{We define spectrums using multiset which is a set where each element has a multiplicity. We use the usual inclusion, sum, and difference of multisets.}
\begin{definition}[Spectrum of matrices]
Let a matrix $M\in \mathcal{L}$ and a square matrix $Q \subseteq \mathcal{S}$. The singular value spectrum of $M$ is the following multiset of $\mathbb{R}$
\begin{equation}
\sigma(M) \triangleq  \MS{\text{singular values of } M }
\end{equation}
and, the eigenvalue spectrum of $Q$ is the following multiset of $\mathbb{C}$
\begin{equation}
\lambda(Q) \triangleq  \MS{\text{eigenvalues of } Q }.
\end{equation}
\end{definition}
\comN{We will typically use $\lambda$ to denote an arbitrary multiset of complex or real values with finitely many elements (meant to represent spectrums), and $\Lambda$ to denote a set of spectrums $\lambda$.}
We denote the set of \comN{(eigenvalue or singular value)} spectrums whose elements belong to a given subset $S$ by
\begin{align}
\Delta(S) & \triangleq \{ \lambda : \forall a\in \lambda, a\in S \}. \label{eq:LambdaS}
\end{align}
%\todo[inline]{To discuss: Should we define $\Delta(S) \triangleq \{ M : a\in S,~\forall a\in \lambda(M) \}$ instead?}
$\lambda$ is a multiset and considers the multiplicity of singular values and eigenvalues. We use a multiset and not a sequence since matrices with the same singular values or eigenvalues but not in the same order are the same up to a unitary transformation (and have thus the same Gram matrix by Lemma \ref{lem:unitary}).
%We use the usual inclusion, sum, and difference of multisets for spectrums.% $\lambda_1$ and $\lambda_2$.
%and the usual Minkowski sum for sets of spectrums $\Lambda_1$ and $\Lambda_2$.

We can now define Gram matrices associated with spectrums. More precisely, given a set of spectrums $\Lambda$, the set of Gram matrices associated with $\Lambda$ is
\begin{align}
\GMM{\Lambda} & \triangleq \left\{ (G,H) =(\GramTwo{X}{V},\GramTwo{Y}{U})\,:\,\exists M :  Y=MX, V = M^{T} U, \text{ and } \sigma(M) \in \Lambda \right \} \label{eq:Plambda}
\end{align}
and similarly, for symmetric linear operators, we have
\begin{align}
\mathcal{G}_\Lambda & \triangleq \left\{ G=\GramFour{X}{Y}{U}{V}\,:\, \exists M \text{ \comD{symmetric}}:  Y=MX, V = M^{T} U, \text{ and } \lambda(M) \in \Lambda \right \}  \label{eq:Glambda} %\subseteq \mathbb{S}^{2N \times 2N} 
\end{align}

Due to Lemma \ref{lem:unitary} and the following corollary, we focus on spaces of Gram matrices associated with a set of singular or eigenvalue spectrum $\Lambda$.

\begin{corollary}[Spectrum characterization]\label{cor:spectr_chara}
Let $\mathcal{M}$ be a set of linear operators (resp.\@ symmetric linear operators) of any \comN{finite} dimension.
We have $\GMM{\mathcal{M}} = \GMM{\Lambda}$ (resp.\@ $\GM{\mathcal{M}} = \GM{\Lambda}$) where $\Lambda = \{ \sigma(M) \,:\, M \in \mathcal{M} \}$ (resp.\@ $\Lambda = \{ \lambda(M) \,:\, M \in \mathcal{M} \}$).
\begin{proof}
Thanks to Lemma \ref{lem:unitary}, \comN{a pair of Gram matrix $(G,H)$ (resp.\@ $G$) is also associated with its \addDJune{orthogonally} diagonalized version and reciprocally, therefore,} $(G,H) \in \GMM{\mathcal{M}} \Leftrightarrow (G,H) \in \GMM{\Lambda}$ (resp.\@ $G \in \GM{\mathcal{M}} \Leftrightarrow G \in \GM{\Lambda}$).
\end{proof}
\end{corollary}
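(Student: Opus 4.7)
The plan is to prove the two set inclusions $\GMM{\mathcal{M}} \subseteq \GMM{\Lambda}$ and $\GMM{\Lambda} \subseteq \GMM{\mathcal{M}}$ separately, with the analogous argument handling the symmetric case $\GM{\mathcal{M}} = \GM{\Lambda}$. The forward inclusion should be essentially tautological: given $(G,H) \in \GMM{\mathcal{M}}$ witnessed by some $M \in \mathcal{M}$, one has $\sigma(M) \in \Lambda$ by the very definition of $\Lambda$, so the same $M$ (together with the same $X, Y, U, V$) serves as a valid witness for $(G,H) \in \GMM{\Lambda}$.

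For the reverse inclusion, I would start from $(G,H) \in \GMM{\Lambda}$ with a witness $M$ satisfying $Y = MX$, $V = M^{T} U$, and $\sigma(M) \in \Lambda$. By the definition of $\Lambda$, I can pick some $M_0 \in \mathcal{M}$ with $\sigma(M_0) = \sigma(M)$ (respectively $\lambda(M_0) = \lambda(M)$ in the symmetric setting). I would then invoke the singular value decomposition (or the spectral theorem in the symmetric case) to produce unitary matrices $R$ and $S$ such that $M_0 = R M S^{T}$ (respectively $M_0 = R M R^{T}$). Lemma~\ref{lem:unitary} then immediately states that $(G,H)$ is also associated with $M_0$, and since $M_0 \in \mathcal{M}$, this places $(G,H)$ in $\GMM{\mathcal{M}}$.

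The main technical obstacle I expect is matching the ambient dimensions of $M$ and $M_0$, because operators in $\mathcal{M}$ may act between spaces whose dimensions differ from those on which $M$ is defined. My approach would be to zero-pad both operators to a common size before invoking the unitary equivalence. Appending zero rows or columns adds only zero singular values, and the corresponding extensions of $X, Y, U, V$ by zero coordinates leave every inner product unchanged, so the Gram matrices $G$ and $H$ are unaffected. Once the two operators share a common ambient dimension and have identical multisets of singular values (or eigenvalues), writing them in their SVD (or spectral) forms makes the unitary equivalence standard, and Lemma~\ref{lem:unitary} closes the proof.
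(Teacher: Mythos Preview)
Your approach coincides with the paper's: both rest on Lemma~\ref{lem:unitary} to pass between an operator and its orthogonally diagonalized form, and the forward inclusion is indeed tautological. In the symmetric case your argument is already complete and padding is unnecessary, because two symmetric matrices sharing the same eigenvalue multiset automatically have the same size (since $|\lambda(M)|$ equals the dimension of $M$), hence are orthogonally similar, and Lemma~\ref{lem:unitary} applies directly.

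In the rectangular case you correctly flag a dimension obstacle that the paper's one-line argument also leaves implicit, but the zero-padding fix does not close it. After padding $M$ and $M_0$ to a common ambient size and writing $\tilde M_0 = R\tilde M S^{T}$ with $R,S$ unitary, Lemma~\ref{lem:unitary} only tells you that $(G,H)$ is associated with the \emph{padded} operator $\tilde M_0$, and $\tilde M_0$ need not belong to $\mathcal{M}$. There is no automatic ``un-padding'': the unitary $S$ will not in general carry the zero block of $\bigl(\begin{smallmatrix}X\\0\end{smallmatrix}\bigr)$ back to a zero block, so truncating to the first $n_0$ coordinates alters the Gram matrix. Concretely, take $\mathcal{M}=\{(1)\}\subset\mathbb{R}^{1\times 1}$, so $\Lambda=\{\{1\}\}$; the $2\times 1$ operator $M=e_1$ also has $\sigma(M)=\{1\}\in\Lambda$, and with $X=(1)$, $U=e_2$ one obtains $H=I_2$, which no $1\times 1$ operator can witness since then $(Y\ U)$ has a single row. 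Thus $\GMM{\Lambda}\not\subseteq\GMM{\mathcal{M}}$ in this example. The symmetric statement is safe because $|\lambda(\cdot)|$ pins down the dimension, but the rectangular reverse inclusion seems to need an extra hypothesis on $\mathcal{M}$ (e.g., closure under isometric embedding into larger ambient spaces) that neither your sketch nor the paper's proof makes explicit.
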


\subsection{Main result}
\label{ssec:lim_main_result}
We can now show the two main results of the section, namely, (i) sets of Gram matrices, $\GMM{\Delta(S)}$ and $\GM{\Delta(S)}$ associated with linear operators with all singular values in a subset $S \subseteq \R$ or with symmetric linear operators with all eigenvalues in a subset $S\subseteq \R$ are convex, and reciprocally, (ii)  if $\GMM{\mathcal{M}}$ or $\GM{\mathcal{M}}$ are convex, then they are associated with classes of linear operators and symmetric linear operators with singular values in some subset $S \subseteq \R$ or eigenvalues in some subset $S \subseteq \R$.
%\chcomment[id=D]{Should we state Theorem~\ref{lem:cone} first and then Theroem~\ref{th:limit} followed by Theorem~\ref{th:union_XY}and then  Remark 1?}
\begin{theorem}\label{lem:cone}
Let $S\subseteq \R$. $\GMM{\Delta(S)}$ (resp.\@ $\GM{\Delta(S)}$)
%(defined in \eqref{eq:Plambda}, \eqref{eq:Glambda} and \eqref{eq:LambdaS}) 
is convex.
%cones, namely,
%\begin{align}
%c_1 (G_1,H_1) + c_2 (G_2,H_2) \in \GMM{\Lambda_S} && \forall c_1,c_2 \geq 0, \forall (G_1,H_1), (G_2,H_2) \in \GMM{\Lambda_S} \\
%    c_1 G_1 + c_2 G_2 \in \GM{\Lambda_S} && \forall c_1,c_2 \geq 0, \forall G_1,G_2 \in \GM{\Lambda_S}.
%\end{align}
%\dzcnote{should we just state Lemma~\ref{lem:cone_only}?}
\begin{comment}
\begin{proof}We show that $\GMM{\Delta(S)}$ and $\GM{\Delta(S)}$ are convex cones. \\

Let $(G_i,H_i) \in \GMM{\Delta(S)}$. We have $ (G_i,H_i) = \Gramm{X_i}{Y_i}{U_i}{V_i} \text{ with } Y_i = M_i X_i, V_i = M_i^{T} U_i \text{ and } \sigma(M_i) \in \Delta(S) $. Let $(X,Y,U,V) \triangleq \left(\vcol{\sqrt{c_1}X_1}{\sqrt{c_2}X_2}, \vcol{\sqrt{c_1}Y_1}{\sqrt{c_2}Y_2}, \vcol{\sqrt{c_1}U_1}{\sqrt{c_2}U_2}, \vcol{\sqrt{c_1}V_1}{\sqrt{c_2}V_2}\right)$
%Y=\vcol{\sqrt{c_1}Y_1}{\sqrt{c_2}Y_2}$, $X = \vcol{\sqrt{c_1}X_1}{\sqrt{c_2}X_2}$, $V=\vcol{\sqrt{c_1}V_1}{\sqrt{c_2}V_2}$, $U = \vcol{\sqrt{c_1}U_1}{\sqrt{c_2}U_2}$
and $M \triangleq \vmat{M_1}{0}{0}{M_2}$.
%Since $X^{T} X = \sum_{i=1}^2 c_i X^{T}_i X_i$ (and similarly for $X^{T} Y$, $Y^{T}Y$, etc), 
We have $c_1 (G_1,H_1) + c_2 (G_2,H_2) = \Gramm{X}{Y}{U}{V}$, $Y = M X$, $V = M^{T} U$, $\sigma(M) \in \Delta(S)$, and therefore $c_1 (G_1,H_1) + c_2 (G_2,H_2)\in \GMM{\Delta(S)}$. \\

For the symmetric case, let $G_i \in \GM{\Delta(S)}$. We have $ G_i = \Gram{X_i}{Y_i}$ with $Y_i = Q_i X_i$ and $\lambda(Q_i) \in \Delta(S) $. Let $(X,Y) \triangleq \left(\vcol{\sqrt{c_1}X_1}{\sqrt{c_2}X_2}, \vcol{\sqrt{c_1}Y_1}{\sqrt{c_2}Y_2}  \right)$ and $Q \triangleq \vmat{Q_1}{0}{0}{Q_2}$.
We have $c_1 G_1 + c_2 G_2 = \Gram{X}{Y}$,  $Y = Q X$,  and $\lambda(Q) \in \Delta(S)$, and therefore  $c_1 G_1 + c_2 G_2\in \GM{\Delta(S)}$.
\end{proof}
\end{comment}
\begin{proof}
We show that $\GMM{\Delta(S)}$ (resp.\@ $\GM{\Delta(S)}$) are convex cones.

Let $G_i \in \GM{\Delta(S)}$ (resp.\@ $(G_i,H_i) \in \GMM{\Delta(S)}$) for $i=1,2$. We have $G_i = \GramFour{X_i}{Y_i}{U_i}{V_i}$ (resp.\@ $(G_i,H_i) = (\GramTwo{X_i}{V_i},\GramTwo{Y_i}{U_i})$) with $Y_i = M_i X_i, V_i = M_i^{T} U_i$ and $\lambda(M_i) \in \Delta(S)$ (resp.\@ $\sigma(M_i) \in \Delta(S)$).
Let $(X~Y~U~V) \triangleq
\begin{pmatrix}
\sqrt{c_1}(X_1 & Y_1 & U_1 & V_1) \\
\sqrt{c_2}(X_2 & Y_2 & U_2 & V_2)
\end{pmatrix}$
and $M \triangleq \vmat{M_1}{0}{0}{M_2}$.
One can directly verify that $c_1 G_1 + c_2 G_2 = \GramFour{X}{Y}{U}{V}$ (resp.\@ $c_1 (G_1,H_1) + c_2 (G_2,H_2) = (\GramTwo{X}{V}, \GramTwo{Y}{U}$), $Y = M X$, $V = M^{T} U$, $\lambda(M) \in \Delta(S)$ (resp.\@ $\sigma(M) \in \Delta(S)$), and therefore $c_1 G_1 + c_2 G_2\in \GM{\Delta(S)}$ (resp.\@ $c_1 (G_1,H_1) + c_2 (G_2,H_2)\in \GMM{\Delta(S)}$).
\end{proof}
\end{theorem}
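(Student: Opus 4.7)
My plan is to establish the slightly stronger statement that both $\GMM{\Delta(S)}$ and $\GM{\Delta(S)}$ are convex cones, by exhibiting an explicit witness for every nonnegative linear combination of two elements. The natural construction is block-diagonal stacking of the operators combined with a $\sqrt{c_i}$ rescaling of the interpolation data, so that the Gram matrices add with the correct weights while the combined operator has spectrum equal to the (multiset) sum of the two original spectrums, and is therefore still supported in $S$.

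Concretely, given $G_1, G_2 \in \GM{\Delta(S)}$ realized by symmetric operators $M_i$ and data $(X_i, Y_i, U_i, V_i)$ with $Y_i = M_i X_i$, $V_i = M_i^T U_i$, $\lambda(M_i) \in \Delta(S)$, and given $c_1, c_2 \geq 0$, I would set $M = \vmat{M_1}{0}{0}{M_2}$ and vertically stack the data as $X = \vcol{\sqrt{c_1}\,X_1}{\sqrt{c_2}\,X_2}$, with $Y, U, V$ stacked analogously. Three routine verifications then complete the argument: (i) $Y = MX$ and $V = M^T U$, which hold because $M$ is block-diagonal and scalar multiplication commutes with left matrix multiplication; (ii) the off-diagonal cross blocks in the Gram products vanish because the two stacked halves live in orthogonal coordinate subspaces, so $\GramFour{X}{Y}{U}{V} = c_1 G_1 + c_2 G_2$; and (iii) the eigenvalues of the block-diagonal symmetric $M$ are precisely the multiset sum $\lambda(M_1) + \lambda(M_2)$, whose elements all lie in $S$, giving $\lambda(M) \in \Delta(S)$.

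The rectangular case for $\GMM{\Delta(S)}$ proceeds identically after replacing $\lambda$ by $\sigma$: the singular values of a block-diagonal matrix are the multiset sum of the singular values of the blocks, and exactly the same stacking realizes $c_1 (G_1, H_1) + c_2 (G_2, H_2)$ as the pair $(\GramTwo{X}{V}, \GramTwo{Y}{U})$ associated with $M$ via $Y = MX$ and $V = M^T U$.

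There is no real technical obstacle here once the stacking construction is identified; the argument works precisely because Definition~\ref{def:associated} leaves the ambient dimension of the operator $M$ free, so the enlarged block-diagonal operator is a valid representative. The point worth underlining is the clean behavior of the spectrum under block-diagonal combination, which is exactly what makes the spectrum-only condition $\Delta(S)$ compatible with convexity.
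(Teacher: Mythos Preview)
Your proposal is correct and matches the paper's proof essentially line for line: the paper also proves the stronger convex-cone property by block-diagonal stacking $M = \vmat{M_1}{0}{0}{M_2}$ together with the $\sqrt{c_i}$-scaled vertical stacking of the data, and then verifies the same three points (the Gram identity, the intertwining relations $Y=MX$, $V=M^T U$, and $\lambda(M)\in\Delta(S)$ resp.\ $\sigma(M)\in\Delta(S)$). Your explicit remark that the ambient dimension of $M$ is free in Definition~\ref{def:associated} is exactly the mechanism the paper relies on implicitly.
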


\begin{theorem}\label{th:limit}
Let $\mathcal{M}$ be a set of linear operators {(resp.\@ symmetric linear operators)}. If $\GMM{\mathcal{M}}$ (resp.\@ $\GM{\mathcal{M}}$)
%(defined in \eqref{eq:Plambda} resp.\@ \eqref{eq:Glambda})
is convex, then
\begin{equation}
\GMM{\mathcal{M}} = \GMM{\Delta(S)} \text{ (resp.\@ $\GM{\mathcal{M}} = \GM{\Delta(S)}$)}
\end{equation}
for some $S\subseteq \R$. % (resp.\@ $S \in \C$)
%\todo[inline]{Should we state $\GMM{\Lambda} = \GMM{\Lambda_{S_\Lambda }}$ so that it will be clear that multiplicity in fact can not be controlled if $\mathcal{P}_\Lambda$ is convex.}
\begin{proof}
The proof is deferred to Appendix \ref{app:proof_limit}.  
\end{proof}
\end{theorem}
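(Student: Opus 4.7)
The approach is to identify the natural candidate $S \triangleq \bigcup_{\lambda \in \Lambda} \lambda \subseteq \R$, where $\Lambda = \{\sigma(M) : M \in \mathcal{M}\}$ in the non-symmetric case and $\Lambda = \{\lambda(M) : M \in \mathcal{M}\}$ in the symmetric case, and to prove $\GMM{\mathcal{M}} = \GMM{\Delta(S)}$ (resp.\@ $\GM{\mathcal{M}}=\GM{\Delta(S)}$). By Corollary~\ref{cor:spectr_chara}, I may work with $\GMM{\Lambda}$ in place of $\GMM{\mathcal{M}}$. Since any Gram set is trivially closed under non-negative scaling (multiply all generating vectors by $\sqrt{c}$), the assumed convexity in fact upgrades to a convex cone structure. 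The easy inclusion $\GMM{\Lambda} \subseteq \GMM{\Delta(S)}$ is immediate: every $\lambda \in \Lambda$ has all its elements in $S$, hence $\lambda \in \Delta(S)$.

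For the reverse inclusion, I would take $(G,H) \in \GMM{\Delta(S)}$ (resp.\@ $G \in \GM{\Delta(S)}$) associated with some operator $M$ whose spectrum lies in $\Delta(S)$, and invoke Lemma~\ref{lem:unitary} to assume that $M = \Sigma$ is diagonal, obtained from its SVD (resp.\@ eigendecomposition), with diagonal entries $a_1,\ldots,a_d \in S$. After this reduction, the rows of $X,Y,U,V$ decouple: the $j$-th row of $Y$ equals $a_j$ times the $j$-th row of $X$, and similarly for $V$ versus $U$. Consequently the Gram (pair) splits as $(G,H) = \sum_j (G_j, H_j)$ (resp.\@ $G = \sum_j G_j$), where each summand is the Gram of a one-dimensional sub-problem corresponding to the scalar operator $a_j$.

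To show that each $(G_j,H_j)$ lies in $\GMM{\Lambda}$, I would pick some $M_j \in \mathcal{M}$ having $a_j$ in its spectrum (which exists because $a_j \in S$), together with associated unit right and left singular vectors $q_j,p_j$ (or a unit eigenvector $q_j=p_j$ in the symmetric case). Placing all columns of the sub-problem along $q_j$ for $X,V$ and along $p_j$ for $Y,U$ produces a Gram (pair) genuinely induced by $M_j$ that coincides with $(G_j,H_j)$; hence $(G_j,H_j) \in \GMM{\mathcal{M}} = \GMM{\Lambda}$. The convex cone property then concludes $(G,H) = \sum_j (G_j,H_j) \in \GMM{\Lambda}$.

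The main technical obstacle will be the non-symmetric rectangular case: when $M$ is not square, $\Sigma$ has only $\min(m,n)$ singular values, and the ``extra'' rows of $X$ or $U$ sit in kernel or cokernel directions, contributing to the diagonal blocks $X^{T}X$ or $U^{T}U$ of $G$ and $H$ without being indexed by any $a_j \in S$. These null-direction pieces must still be realised inside $\GMM{\Lambda}$ by embedding them into a kernel (respectively cokernel) direction of some suitably chosen $M \in \mathcal{M}$, and showing that such operators necessarily exist whenever $\GMM{\mathcal{M}}$ is convex and contains non-square Grams is, I expect, the most delicate bookkeeping in the argument.
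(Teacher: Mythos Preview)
Your outline is essentially the paper's proof: define $S$ as the union of all values occurring in $\Lambda$, invoke Corollary~\ref{cor:spectr_chara}, diagonalise via Lemma~\ref{lem:unitary}, split the Gram (pair) row by row, realise each rank-one piece via an operator whose spectrum lies in $\Lambda$ (the paper phrases this as padding with zeros and completing the spectrum with a block $\bar M_j$, which is your singular-vector embedding in different clothing), and recombine using the convex-cone structure.

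On your flagged obstacle: it is real---indeed the paper's own write-up is brisk at exactly this spot, asserting $(G,H)=\sum_{i=1}^d(G_i,H_i)$ ``since $Y_i=0$ for $i>d$'' without discussing possible nonzero cokernel rows of $U$---but the resolution needs neither convexity nor any special hypothesis on $\mathcal{M}$. Pick any nonempty $\lambda'\in\Lambda$, realise it by the strictly tall matrix $M'=\bigl(\begin{smallmatrix}\mathrm{diag}(\lambda')\\0\end{smallmatrix}\bigr)$, which has $\sigma(M')=\lambda'$ and a one-dimensional cokernel, and place the leftover $U_i$ on that cokernel row with $X'=Y'=V'=0$: this exhibits $(0,\mathrm{Gram}(0,U_i))\in\GMM{\{\lambda'\}}\subseteq\GMM{\Lambda}$ directly. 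A strictly wide matrix handles leftover $X$-rows symmetrically. These null-direction contributions thus become additional summands in the same convex-cone combination, so the ``delicate bookkeeping'' you anticipate collapses to a one-liner.
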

Observe that it is not possible to control the multiplicity of the singular values or eigenvalues contained in the spectrums of linear operators represented by $\GMM{\Delta(S)}$ and $\GM{\Delta(S)}$.
%\todo[inline]{TODO: Say that this Theorem implies that we can only characterize the spectrum of matrices.}
\repDAug[]{%
\begin{remark}
When $M$ is symmetric, $\GMM{\mathcal{M}}$ is a projection of $\GM{\mathcal{M}}$, which can be seen from \eqref{eq:notation_Gram} and \eqref{eq:notation_Gramsquare}.
    If $\GM{\mathcal{M}}$ is a convex set, it follows from Theorem \ref{th:limit} that $\GM{\mathcal{M}} = \GM{\Delta(S)}$ for some $S$. Consequently, $\GMM{\mathcal{M}}$, being a projection of $\GM{\mathcal{M}}$, is convex, and therefore, $\GMM{\mathcal{M}}=\GMM{\Delta(T)}$ for some $T$.
     \repDOct[]{Recall that $\GM{\Delta{(S)}}$ is only characterized by the \repDOct[]{locations of }{} eigenvalues whereas $\GMM{\Delta{(T)}}$ is only characterized by the \repDOct[]{locations of }{} singular values. No inconsistency arises}{} since symmetric matrices have the singular values that are the absolute values of their eigenvalues. Therefore, $T = \{|\lambda| : \lambda \in S\}$.
    Finally, note that $\GMM{\Delta{(T)}}$ is associated with a larger set of linear operators than $\GM{\Delta{(S)}}$. This is natural since $P_\mathcal{M}$ is merely a projection of $G_{\mathcal{M}}$.
\end{remark}
}{}
% \begin{remark}
% \repDAug{%
%   The limitation given by Theorem~\ref{th:limit} is also valid on real symmetric matrices.
%   Note that normal matrices with real eigenvalues are exactly real symmetric matrices.
%   When $S \subseteq \R$, the set of Gram matrices $\GM{\Delta(S)}$ is associated with the real symmetric matrices with eigenvalues in $S$.
%   Therefore, the limitation for normal matrices implies limitation on real symmetric matrices.
%   Similar statement also work for real skew-symmetric matrices.
% }{}%
% \addDJune[
% JH: I think I already mentioned this issue.
% Proving the result for normal matrices does not directly imply the case of symmetric matrices, even if the proofs are similar. if you prove "if a set of gram matrices interpolable by a given set of normal matrices is convex, then that set of normal matrices is of the form...", I don't see a simple way it implies the result for symmetric matrices]{}
% \end{remark}

\begin{remark}
In this section, we considered Gram matrices constructed with vectors involved by the linear operators. However, we should have also considered other vectors independent of the linear operator to be complete. In other words, instead of \eqref{eq:notation_Gramsquare} and \eqref{eq:notation_Gram}, we should use
\begin{equation}\label{extended_gram}
{\bar{G}} = \mathrm{Gram}\begin{pmatrix}
X & Y & U & V & W
\end{pmatrix}
\end{equation}
and
\begin{equation}
{(\bar{G},\bar{H})} = (\mathrm{Gram}\begin{pmatrix}
X & V & W
\end{pmatrix}, \begin{pmatrix}
Y & U & Z
\end{pmatrix})
\end{equation}
where $Y=MX$ and $V=M^{T} U$ for some linear operator $M$.
{Note that since $G$ in \eqref{eq:notation_Gramsquare} (resp.\@ $(G,H)$ in \eqref{eq:notation_Gram}) is a projection of $\bar{G}$ (resp.\@ $(\bar{G},\bar{H})$), if
\begin{align*}
\bar{\mathcal{P}}_{\mathcal{M}} & \triangleq \{
(\bar{G},\bar{H}) = (\mathrm{Gram}\begin{pmatrix}
                                  X & V & W
                                  \end{pmatrix}, \begin{pmatrix}
                                                 Y & U & Z
                                                 \end{pmatrix})
                                & : \exists  M \in \mathcal{M}, W, Z:  Y=MX,~V=M^{T}U  \},                       \\
\bar{\mathcal{G}}_{\mathcal{Q}} & \triangleq \{ \bar{G} = \mathrm{Gram}\begin{pmatrix} X & Y & U & V & W \end{pmatrix}
                                & : \exists M \in \mathcal{Q}, W: Y=MX,~V=M^{T} U \}.
\end{align*}
is convex, their projections $\GMM{\mathcal{M}}$ and $\GM{\mathcal{Q}}$ are convex.
}
%However, all the results in this section remain the same with the extended Gram matrices.
%\comD{Therefore, they also apply to the Gram matrices used in Theorem \ref{thm:L_muL_interpolation_conditon}.}
All the limitation results also hold for extended Gram matrices.
\end{remark}

\begin{comment}
\begin{remark}
When considering a set of square linear operators $\mathcal{M}$, $\GMM{\comN{\mathcal{M}}}$ is a projection of $\GM{\comN{\mathcal{M}}}$. However, There is no contradiction between the limitation of representations of $\GMM{\comN{\mathcal{M}}}$ and $\GM{\comN{\mathcal{M}}}$. Let a set of symmetric linear operators $\mathcal{M}$n and the convex set  $\GM{\comN{\mathcal{M}}}$. Since $\GMM{\comN{\mathcal{M}}}$ is a projection of the convex set  $\GM{\comN{\mathcal{M}}}$, it is also convex and by Theorem \ref{th:limit}
\end{remark}
\end{comment}

\subsection{Set of Gram matrices and interpolation conditions}
\label{ssec:gram_inter}
\addDJune{%
  In this section, we examine the interpolation conditions for the input-output quadruple $(X,Y,U,V)$ and how they relate to the main limitation result concerning Gram representation. \\
  Although the quadruple contains more information than its Gram matrix $G$ (or \@ the pair $(G,H)$), 
  \repDOct[%This sentence is dangerous because it makes a strong claim that could be ambiguous. It is not clear what is meant by "spectral properties". There could be alternative view of spectral properties for which going trough $G_\Delta$ is not the best option \\
% + make it clear it announce theorem 2.5
]{%
if one focus only on the locations of eigenvalues or singular values of the linear operators to be interpolated, ignoring multiplicities, then interpolation conditions for $(X,Y,U,V)$ can be derived exclusively form their Gram representations $G$ (or \@ $(G,H)$).}{%
% it is noteworthy that when analyzing the spectral properties of the
% interpolated linear operators, one may restrict attention to the Gram matrices alone.
% This simplification allows us to derive interpolation conditions for the quadruple by
% working exclusively with these matrix representations
}
% Only the class of linear operators that are closed under unitary transformation can be determined
}
\begin{theorem}\label{th:union_XY}
%Let $S_1,S_2\subseteq \R$. $(X,Y)$ is $\mathcal{S}_{S_1 \cup S_2}$-interpolable if, and only if, $\GramTwo{X}{Y} \in \GM{\Lambda_{S_1 \cup S_2}}$.
Let $S \subseteq \R$.
% $(X,Y)$ and 
% $G =\GramTwo XY$. 
$(X,Y,U,V)$ is $\mathcal{S}_S$-interpolable (resp.\@ $\mathcal{L}_S$-interpolable) if, and only if, $\GramFour{X}{Y}{U}{V} \in \GM{\Delta(S)}$ (resp.\@ $(\GramTwo{X}{V}, \GramTwo{Y}{V})\in \GMM{\Delta(S)}$).
%\chcomment[id=D]{This is previously Theorem 4.2. This theorem is not related to union. I suggest to move this theorem here to clarify the definition of $\GMM{\mathcal{M}}$ and $\GM{\mathcal{Q}}$, see my comment under the definition of $\mathcal{G}_{\mathcal{Q}}$. I think it need more description since I found that ``$(X,Y,U,V)$ is interpolable is $\mathcal{P}_{\mathcal{M}}$ iff $(G,H) \in \mathcal{P}_{\mathcal{M}}$'' may not be true.}
\begin{proof}
% The reasoning is similar to \cite[Lemma 3.5]{bousselmi2024interpolation} \comN{and is reproduced here for completeness.}
%\chcomment[id=D]{Is the reasoning still similar?}
  \noindent \textit{(Necessity)} Let $(X,Y,U,V)$ being $\GM{\Delta(S)}$-interpolable, then $\exists M$ such that \comD{$\lambda(M) \in \Delta(S)$,} $Y=MX$, $V=M^{T} U$ and $G=\GramFour{X}{Y}{U}{V} \in \GM{\Delta(S)}$.\\
  \noindent \textit{(Sufficiency)} Let $G \in \GM{\Delta(S)}$, (resp.\@ $(G,H) \in \GMM{\Delta(S)}$) then there exists $X'$, $Y'$, $U'$, $V'$, $M'$ such that
\[
  \begin{aligned}
    Y'=M'X', V'=(M')^{T} U' ,\ &  G = \GramFour{X'}{Y'}{U'}{V'} \text{ and } \lambda(M') \in \Delta(S).
    \\ \text{(resp.\@ }& G = \GramTwo{X'}{V'}, H=\GramTwo{Y'}{U'}\text{ and } \sigma(M') \in \Delta(S).
    \text{)}
  \end{aligned}
\]
Therefore, $(X,Y, U,V)$ (resp.\@ $(X\ V)$, $(Y\ U)$) and $(X',Y', U', V')$ (resp.\@ $(X'\ V')$, $(Y'\ U')$) build the same Gram matrix, i.e., $G = \GramFour XYUV = \GramFour{X'}{Y'}{U'}{V'}$ (resp.\@ $G = \GramTwo XY = \GramTwo{X'}{Y'}$, $H=\GramTwo UV =\GramTwo{U'}{V'}$).

%\chadded[id=D,comment={I used \cite[Theorem 7.3.11]{horn2012matrix} to simplify the proof. If we use Theorem \ref{th:int_cond_unit}, we need to rework on some issues on dimension.}]
{Without loss of generality, we may assume $X',Y', U', V'$ have no more rows than $X,Y, U,V$ respectively. By Theorem~\ref{th:gram_equal_exist_isometric}, there exists an isometric matrix $R$ (resp.\@ $R,S$) such that $RX'=X,$ $RY'=Y$, $RU'=U$, $RV'=V$ (resp.\@ $Y=SY'$, $U=SU'$), then $X'=R^{T} X$, $U'=R^{T} U$, $Y=RMR^{T} X$ and $V= R(M')^{T} R^{T} U$ (resp.\@ $Y=SM' R^{T} X$, $V=S(M')^{T} U$). Since $RM' R^{T} \in \mathcal{S}_S$ (resp.\@ $S M' R^{T} \in \mathcal{L}_S$), we have $(X,Y,U,V)$ is $\mathcal{S}_S$ -interpolable (resp.\@ $\mathcal{L}_S$-interpolable).}
\iffalse
\comN{%
In order to apply Theorem \ref{th:int_cond_unit}, we add zeros to $(X,Y)$ or $(X',Y')$ such that they have the same number of rows, namely,
\begin{equation}
G = \vmat{X}{Y}{0}{0}^{T} \vmat{X}{Y}{0}{0} = \vmat{X'}{Y'}{0}{0}^{T} \vmat{X'}{Y'}{0}{0}
\end{equation}
Note that adding such rows of zeros does not modify the Gram matrix. Moreover, by Theorem \ref{th:int_cond_unit}, we have
\begin{equation}\label{eq:proof_eq2}
\vmat{X'}{Y'}{0}{0} = V \vmat{X}{Y}{0}{0}
\end{equation}
for some unitary matrix $V$. We can augment the equality $Y'=M'X'$ to
\begin{equation}\label{eq:proof_eq3}
\vcol{Y'}{0} = \vmat{M'}{0}{0}{Q} \vcol{X'}{0}
\end{equation}
where $Q$ is a matrix with eigenvalues in $T$. Exploiting \eqref{eq:proof_eq1}, \eqref{eq:proof_eq2}, and \eqref{eq:proof_eq3} yields
\begin{align}
\exists X',Y',M'  : Y' = M'X' & \overset{\eqref{eq:proof_eq3}}{\Leftrightarrow} \exists X',Y',M' : \vcol{Y'}{0} = \vmat{M'}{0}{0}{Q} \vcol{X'}{0}   \\
                              & \overset{\eqref{eq:proof_eq2}}{\Leftrightarrow}  \exists X',Y',M' : V\vcol{Y}{0} = \vmat{M'}{0}{0}{Q} V \vcol{X}{0} \\
                              & \Leftrightarrow  \exists X',Y',M' : Y = \overbrace{V^{T}\vmat{M'}{0}{0}{Q}}^M V X
\end{align}
where $M \in \Delta(S)$ since $M' \in \Delta(T)$ and that unitary transformation preserves the spectrum of a symmetric matrix.
}
\fi
\end{proof}
\end{theorem}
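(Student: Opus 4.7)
The plan is to prove both directions separately. Necessity is essentially a direct unpacking of definitions: if $(X,Y,U,V)$ is $\mathcal{S}_S$-interpolable, some symmetric $M$ with all eigenvalues in $S$ realizes $Y=MX$ and $V=M^{T}U$; since $\lambda(M)\in\Delta(S)$, this very $M$ witnesses $\GramFour{X}{Y}{U}{V}\in\GM{\Delta(S)}$. The $\mathcal{L}_S$ case proceeds identically, with singular values in place of eigenvalues and the pair $(\GramTwo{X}{V},\GramTwo{Y}{U})$ in place of a single Gram matrix.

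For sufficiency, the key idea is that two vector collections producing the same Gram matrix are related by an isometric transformation, and that the relevant spectra are invariant under the corresponding similarity (symmetric case) or two-sided isometric sandwich (rectangular case). Given $\GramFour{X}{Y}{U}{V}\in\GM{\Delta(S)}$, unpacking the definition yields $(X',Y',U',V',M')$ with $Y'=M'X'$, $V'=(M')^{T}U'$, $\lambda(M')\in\Delta(S)$, and $\GramFour{X'}{Y'}{U'}{V'}=\GramFour{X}{Y}{U}{V}$. Assuming without loss of generality that $(X',Y',U',V')$ has no more rows than $(X,Y,U,V)$ (otherwise one pads with zero rows, which leaves the Gram matrix unchanged), Theorem~\ref{th:gram_equal_exist_isometric} supplies an isometric $R$ with $RX'=X$, $RY'=Y$, $RU'=U$, $RV'=V$. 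Defining $M := R M' R^{T}$ preserves symmetry; substituting gives $MX = RM'R^{T}X = RM'X' = RY' = Y$ and similarly $M^{T}U = V$. Since isometric conjugation preserves eigenvalues (if $M'v=\lambda v$, then $M(Rv)=\lambda(Rv)$), we conclude $\lambda(M)=\lambda(M')\in\Delta(S)$, so $M\in\mathcal{S}_S$. The rectangular $\mathcal{L}_S$ case is handled by two independent isometries $R,S$, one transporting $(X',V')$ to $(X,V)$ and the other transporting $(Y',U')$ to $(Y,U)$, yielding $M := S M' R^{T}$, whose singular values coincide with those of $M'$.

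The main obstacle is purely dimensional: the Gram matrix does not remember the ambient dimension of its defining vectors, so the isometry relating the two collections may point in either direction depending on which ambient space is larger. This forces a preliminary WLOG reduction (or equivalently a zero-padding argument) to fix the orientation before transporting $M'$ into the ambient space of $(X,Y,U,V)$. Once this orientation is fixed, the remainder of the proof rests on two essentially classical ingredients already available to us: equal Gram matrices imply an isometric relation between the underlying vector collections (Theorem~\ref{th:gram_equal_exist_isometric}), and isometric conjugation (respectively two-sided isometric action) preserves eigenvalues (respectively singular values), which is exactly the unitary invariance of spectra already exploited in Lemma~\ref{lem:unitary}.
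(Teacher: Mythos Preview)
Your proof is correct and follows essentially the same approach as the paper: necessity is an immediate unpacking of definitions, and sufficiency proceeds by extracting a realization $(X',Y',U',V',M')$ of the Gram matrix, invoking Theorem~\ref{th:gram_equal_exist_isometric} to obtain an isometric $R$ (resp.\ $R,S$) relating the two collections, and transporting $M'$ via $M=RM'R^{T}$ (resp.\ $SM'R^{T}$) with spectrum preserved. The dimensional WLOG, the use of the isometric-relation lemma, and the spectrum-invariance argument all mirror the paper's proof exactly.
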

\addDJune{%
  Combined with our main results, if the sets of Gram matrices $\GM{\mathcal{M}}$ or $\GMM{\mathcal{M}}$ over the class of linear operators $\mathcal{M}$ are convex, then the Gram matrices inclusion tests provide interpolation conditions for $(X,Y,U,V)$ over the same class of linear operators.
}
%\todo[inline]{Nizar: I think I would move this theorem back to section 4\\
%Zhicheng: This theorem states the relationship between the inclusion of the set of $\GM{\Lambda},\GMM{\Lambda}$ and the interpolation conditions. I think it is not just a technical lemma. It clarify why we define and consider $\GM{\Lambda}, \GMM{\Lambda}$. I think it should be stated earlier. 
%Otherwise, it may not be clear whether the limitation for the set of Gram matrices also exist for interpolation conditions?
%BTW: maybe the nonsymmetric version of this theorem should be stated.
% Otherwise the flow might be Interpolation condition (IC) in section 1, Gram matrices and limitation on spectrums in Section 2 IC in section 3 and then reveal the relation between ICs and $\GM{\Lambda},\GMM{\Lambda}$ in section 4.
%}

%%===========================================%%
%% 2. Interpolation conditions for L_[\mu,L] %%
%%===========================================%%

\section{Interpolation conditions for $\LL{[\mu,L]}$} \label{sect:interpolation}

In this section, we propose interpolation conditions for the class $\LL{[\mu,L]}$ of linear operators whose singular values lie in the interval $[\mu,L]$. The idea is to exploit the polar decomposition to write any such operator as the product between a isometric transformation and a symmetric positive semidefinite linear operator whose eigenvalues also confined to $[\mu,L]$, for which we already have interpolation conditions (\cite[Theorem 7.3.11]{horn2012matrix} and \cite[Theorem 3.3]{bousselmi2024interpolation}).

\repDOct[Jhendrickx: is the meaning obvious due to the definition R? 
So it's R that's isometric, not $R^T$.]{%
The interpolation conditions for the class of unitary and isometric transformations are not new but we propose a \comN{new} formulation based on Definition \ref{def:R_mat_inter1}.
A linear operator $R$ is called isometric if $R^{T} R=I$. If, in addition, $R$ also satisfies $RR^T = I$, then $R$ is unitary. Note that an isometric operator need not have an isometric transpose: only when both $R^T R = I$ and $RR^T = I$ hold does $R$ becomes unitary.
\begin{definition}[Unitary and isometric interpolability] \label{def:unitary_isometric_inter}
Given two collections of data pairs $\{(x_i,y_i)\}_{i\in[N_1]}\subseteq\mathbb{R}^{n}\times\mathbb{R}^{m},$
$\{(u_j,v_j)\}_{j\in[N_2]}\subseteq\mathbb{R}^{m}\times\mathbb{R}^{n}$,
we say that the pairs are unitary‑interpolable (resp.\@ isometric-interpolable) if there exists a unitary (resp.\@ an isometric) linear operator $R$ such that
  \[
      \begin{cases}
          y_i = R   x_i & \forall i\in[N_1],
     \\   v_j = R^T u_j & \forall j\in[N_2].
      \end{cases}
  \]
\end{definition}
}{}

\begin{lemma}[Unitary interpolation conditions]\label{th:int_cond_unit}
The sets of pairs $\{(x_i,y_i)\}_{i\in [N_1]}$ and $\{(u_j,v_j)\}_{j\in [N_2]}$ are unitary-interpolable if, and only if,
% \repDAug[]{}{%
% $\vrow{Y}{U}^{T} \vrow{Y}{U}  = \vrow{X}{V}^{T} \vrow{X}{V} ,$
% which is
% }
\begin{equation} \label{cond:unit}
\begin{cases}
Y^{T} Y= X^{T} X,  \\
Y^{T} U = X^{T} V, \\
U^{T} U = V^{T} V .
\end{cases}
\end{equation}
\end{lemma}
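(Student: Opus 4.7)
The plan is to reduce the two simultaneous relations $Y=RX$ and $V=R^T U$ to a single relation by stacking, and then to appeal to the standard Gram-equality principle already invoked in the proof of Theorem \ref{th:union_XY} (namely Theorem \ref{th:gram_equal_exist_isometric}).

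For necessity, I would simply substitute: if $R$ is unitary with $Y=RX$ and $V=R^T U$, then $Y^T Y = X^T R^T R X = X^T X$, $Y^T U = X^T R^T U = X^T V$, and $V^T V = U^T R R^T U = U^T U$, giving the three identities of \eqref{cond:unit}.

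For sufficiency, the key observation is that \eqref{cond:unit} collapses into a single matrix equality. Define the stacked matrices
\begin{equation*}
A = \begin{pmatrix} X & V \end{pmatrix}, \qquad B = \begin{pmatrix} Y & U \end{pmatrix}.
\end{equation*}
The three conditions $Y^T Y = X^T X$, $Y^T U = X^T V$, and $V^T V = U^T U$ are precisely the four blocks of the single equation $B^T B = A^T A$. Applying Theorem \ref{th:gram_equal_exist_isometric} then yields an isometric $R$ such that $B = R A$; unpacking the blocks gives $Y = R X$ and $U = R V$, from which $V = R^T U$ follows by left-multiplying the second equation by $R^T$ and using $R^T R = I$.

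The one subtlety, which I expect to be the main obstacle, is ensuring that the $R$ produced is genuinely unitary and not merely a rectangular isometry: Theorem \ref{th:gram_equal_exist_isometric} in general delivers only $R^T R = I$. However, in the unitary setting of Definition \ref{def:unitary_isometric_inter} the ambient spaces for the $x_i$'s and $y_i$'s coincide ($n=m$), so $A$ and $B$ share the same number of rows. Since $\mathrm{rank}(A) = \mathrm{rank}(A^T A) = \mathrm{rank}(B^T B) = \mathrm{rank}(B)$, the orthogonal complements $\mathrm{range}(A)^\perp$ and $\mathrm{range}(B)^\perp$ have equal dimension in $\mathbb{R}^n$, and the isometry $A c \mapsto B c$ can be extended to a square orthogonal matrix by mapping any orthonormal basis of $\mathrm{range}(A)^\perp$ onto one of $\mathrm{range}(B)^\perp$. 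A square matrix satisfying $R^T R = I$ automatically satisfies $R R^T = I$, so this extension is unitary, completing the proof.
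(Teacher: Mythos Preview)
Your proof is correct and follows essentially the same route as the paper: stack the relations into $\begin{pmatrix} Y & U \end{pmatrix} = R \begin{pmatrix} X & V \end{pmatrix}$, observe that \eqref{cond:unit} is exactly the block form of the Gram equality $\begin{pmatrix} Y & U \end{pmatrix}^T \begin{pmatrix} Y & U \end{pmatrix} = \begin{pmatrix} X & V \end{pmatrix}^T \begin{pmatrix} X & V \end{pmatrix}$, and invoke Theorem~\ref{th:gram_equal_exist_isometric}. Your final paragraph on extending the isometry to a unitary is slightly more work than needed: once you note $m=n$, Theorem~\ref{th:gram_equal_exist_isometric} already delivers a square isometric $R$, and a square matrix with $R^T R = I$ is automatically unitary.
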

%\noindent This means that the Gram matrices $G$ and $H$ defined in \eqref{eq:notation_Gram} must be the same.
%\chcomment[id=D]{But for square matrices, we use \eqref{eq:notation_Gramsquare}}
\begin{proof}
We exploit the fact that the transpose of a unitary matrix is its inverse.
$\{(x_i,y_i)\}_{i\in [N_1]}$ and $\{(u_j,v_j)\}_{j\in [N_2]}$ are unitary-interpolable if and only if there exist an unitary matrix $R \in \R^{n \times n}$, such that $Y=RX$ and $U=R V$. It is equivalent to $\vrow{Y}{U}^{T} \vrow{Y}{U}  = \vrow{X}{V}^{T} \vrow{X}{V}$ by Theorem~\ref{th:gram_equal_exist_isometric}.
\end{proof}
Since the following fact on Gram matrices and isometric linear operator is invoked repeatedly in the subsequent proofs, we recall it here for clarity and readability.
\begin{lemma}[Theorem 7.3.11 of \cite{horn2012matrix}]\label{th:gram_equal_exist_isometric}
Let $N, m, n$ be positive integers with $m\le n$. Let $A \in \R^{m\times N}$ and $B \in \R^{n\times N}$. Then $A^{T} A=B^{T} B$ if and only if there is an isometric matrix $R \in \R^{n \times m}$ such that $B=RA$.
% If $A$ and $B$ are real, then $R$ may be taken to be real.
\end{lemma}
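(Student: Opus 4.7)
The plan is to prove the equivalence by direct construction, exploiting the freedom one has when extending a partial isometry on a subspace to the whole ambient space.

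The ``if'' direction is immediate: if $B=RA$ with $R^{T}R=I_{m}$, then $B^{T}B=A^{T}R^{T}RA=A^{T}A$.

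For the ``only if'' direction, the key observation is that $A^{T}A=B^{T}B$ yields $\|Az\|^{2}=z^{T}A^{T}Az=z^{T}B^{T}Bz=\|Bz\|^{2}$ for every $z\in\R^{N}$, and hence $\mathrm{null}(A)=\mathrm{null}(B)$. The first step is therefore to define a linear map $R_{0}:\mathrm{col}(A)\subseteq\R^{m}\to\mathrm{col}(B)\subseteq\R^{n}$ by $R_{0}(Az)\triangleq Bz$. Well-definedness follows from the null-space identity (if $Az=Az'$ then $A(z-z')=0$, whence $B(z-z')=0$), and the computation above shows $R_{0}$ is norm-preserving, hence inner-product preserving by polarization. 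Thus $R_{0}$ is an isometric embedding of $\mathrm{col}(A)$ into $\mathrm{col}(B)$.

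The second step is to extend $R_{0}$ to an isometry $R:\R^{m}\to\R^{n}$. I would complete an orthonormal basis of $\mathrm{col}(A)$ to an orthonormal basis of $\R^{m}$ using vectors spanning $\mathrm{col}(A)^{\perp}$, and send these extra basis vectors to an arbitrary orthonormal system inside $\mathrm{col}(B)^{\perp}\subseteq\R^{n}$. The existence of such a system relies on the dimension count
\begin{equation*}
\dim\mathrm{col}(A)^{\perp}=m-\mathrm{rank}(A)\le n-\mathrm{rank}(B)=\dim\mathrm{col}(B)^{\perp},
\end{equation*}
which follows from $\mathrm{rank}(A)=\mathrm{rank}(A^{T}A)=\mathrm{rank}(B^{T}B)=\mathrm{rank}(B)$ combined with the standing hypothesis $m\le n$. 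The resulting matrix $R\in\R^{n\times m}$ satisfies $RA=B$ (since $R$ agrees with $R_{0}$ on every column of $A$) and $R^{T}R=I_{m}$ (since it sends an orthonormal basis of $\R^{m}$ to an orthonormal system in $\R^{n}$), which is exactly the desired isometric property.

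The only mildly delicate point is the rank bookkeeping that justifies the extension; the well-definedness and norm-preservation of $R_{0}$ are immediate consequences of the hypothesis $A^{T}A=B^{T}B$, and everything else is a standard extension argument.
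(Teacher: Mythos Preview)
Your proof is correct and complete. The paper does not supply its own argument for this lemma: it is quoted verbatim as Theorem~7.3.11 of Horn and Johnson's \emph{Matrix Analysis} and used as a black box throughout, so there is nothing in the paper to compare against. Your route---define the partial isometry $R_{0}$ on $\mathrm{col}(A)$ via $Az\mapsto Bz$, check well-definedness from $\mathrm{null}(A)=\mathrm{null}(B)$, then extend orthonormally using the dimension inequality $m-\mathrm{rank}(A)\le n-\mathrm{rank}(B)$---is a standard and self-contained construction; the textbook proof proceeds instead via the singular value decompositions of $A$ and $B$, but the two arguments are equivalent in spirit and difficulty.
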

\begin{proposition}[Isometric interpolation conditions]\label{th:int_cond_isometric}
The sets of pairs $\{(x_i,y_i)\}_{i\in [N_1]}$ and $\{(u_j,v_j)\}_{j\in [N_2]}$ are isometric-interpolable if, and only if,
\begin{equation} \label{cond:isom}
\begin{cases}
Y^{T} Y= X^{T} X,  \\
Y^{T} U = X^{T} V, \\
U^{T} U \succeq V^{T} V.
\end{cases}
\end{equation}
\end{proposition}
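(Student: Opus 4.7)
The plan is to split the argument into necessity (a direct algebraic check) and sufficiency (a dilation reducing to Lemma~\ref{th:int_cond_unit}). For necessity, assuming an isometric $R$ with $Y=RX$ and $V=R^{T}U$, the identity $R^{T}R=I$ immediately gives $Y^{T}Y=X^{T}X$ and $Y^{T}U=X^{T}V$, while the equality $U^{T}U-V^{T}V=U^{T}(I-RR^{T})U$ together with positive semidefiniteness of the orthogonal projector $I-RR^{T}$ onto $\operatorname{range}(R)^{\perp}$ yields $U^{T}U\succeq V^{T}V$.

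For sufficiency, the idea is to realise an isometry as the first $n$ columns of a unitary matrix. Since $U^{T}U\succeq V^{T}V$, I would factor $U^{T}U-V^{T}V=W^{T}W$ (for instance via Cholesky) and set $\tilde X=\begin{pmatrix}X\\0\end{pmatrix}$, $\tilde V=\begin{pmatrix}V\\W\end{pmatrix}$. A direct check confirms that $(\tilde X,Y,U,\tilde V)$ satisfies all three unitary-interpolation equalities in~\eqref{cond:unit}: zero-padding preserves $\tilde X^{T}\tilde X=X^{T}X=Y^{T}Y$ and $\tilde X^{T}\tilde V=X^{T}V=Y^{T}U$, while $\tilde V^{T}\tilde V=V^{T}V+W^{T}W=U^{T}U$ by construction of $W$. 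Lemma~\ref{th:int_cond_unit} then furnishes a unitary $\bar R$ with $Y=\bar R\tilde X$ and $U=\bar R\tilde V$. Taking $R=\bar R\begin{pmatrix}I_{n}\\0\end{pmatrix}$, the first $n$ columns of $\bar R$, one obtains $R^{T}R=I_{n}$, $RX=\bar R\tilde X=Y$ and $R^{T}U=\begin{pmatrix}I_{n}&0\end{pmatrix}\bar R^{T}U=\begin{pmatrix}I_{n}&0\end{pmatrix}\tilde V=V$, giving the desired isometric interpolant.

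The main technical subtlety I expect is dimensional bookkeeping: the padded data $\tilde X,\tilde V$ live in an ambient space of dimension $n+\operatorname{rank}(W)$, which may exceed the original ambient dimension of $Y$ and $U$. Reconciling this with the dimensions appearing in the isometric-interpolability definition requires allowing $R$ to be realised after a preliminary embedding of $Y,U$ into a sufficiently large space, in line with the flexible-dimension convention underlying the earlier interpolation results of the paper. Apart from this bookkeeping, every step reduces to Lemma~\ref{th:int_cond_unit} and the standard factorisation of positive semidefinite matrices.
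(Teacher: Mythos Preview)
Your proof is correct and follows essentially the same approach as the paper: pad $X$ with zeros and $V$ with a factor $W$ (the paper calls it $Z$) of $U^{T}U-V^{T}V$, then invoke Lemma~\ref{th:int_cond_unit} and extract the first $n$ columns of the resulting unitary matrix. The only cosmetic difference is that the paper packages both directions as a single chain of equivalences, whereas you split off necessity as a direct algebraic check; your dimensional-bookkeeping concern is handled in the paper by the same flexible-dimension convention you invoke (and, under the hood, by Theorem~\ref{th:gram_equal_exist_isometric}).
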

\begin{proof}
Given an isometric matrix ${R}$, we can find a unitary matrix $\bar{R}$ such that ${R} =\bar{R}\vcol{I}{0}$. Therefore, the sets of pairs $\{(x_i,y_i)\}_{i\in [N_1]}$ and $\{(u_j,v_j)\}_{j\in [N_2]}$ are isometric-interpolable if, and only if,
\begin{alignat*}{3}
% 1
                & \exists R\text{ isometric},
                & \ \                                & Y=R X \text{ and } V = R^{T} U                                       \\
% 2
\Leftrightarrow\ & \exists \bar{R} \text{ unitary},
                &                                    & Y= \bar{R} \vcol{I}{0} X  \text{ and } V = \vrow{I}{0} \bar{R}^{T} U \\
% 3
\Leftrightarrow\ & \exists \bar{R} \text{ unitary}, Z,
                &                                    & Y=\bar{R}\vcol{X}{0} \text{ and } \vcol{V}{Z} = \bar{R}^{T} U        \\
% 4
%
\overset{\text{Th.\@ \text{\ref{th:int_cond_unit}}}}{\Leftrightarrow}\ 
                & \exists Z,
                &                                    & \vrow{Y}{U} ^{T} \vrow{Y}{U} =
\vrow{\vcol{X}{0}}{\vcol{V}{Z}} ^{T} \vrow{\vcol{X}{0}}{\vcol{V}{Z}}                                                        \\
% 5
\Leftrightarrow\ & \exists Z,
                &                                    & \begin{cases}
                                                       Y^{T} Y= X^{T} X,  \\
                                                       Y^{T} U = X^{T} V, \\
                                                       U^{T} U=V^{T} V + Z^{T} Z ,
                                                       \end{cases}                                               \\
% 6
\Leftrightarrow\ &
                &                                    & \begin{cases}
                                                       Y^{T} Y= X^{T} X,  \\
                                                       Y^{T} U = X^{T} V, \\
                                                       U^{T} U \succeq V^{T} V.
                                                       \end{cases}
\end{alignat*}
\end{proof}

\repDOct[JH: Maybe it's worth mentioning that x,y u,v being isometric interpolable does not imply that u,v x,y is. hance the asymmetry of conditions.]{%
The two interpolation conditions differ only in the third line of systems
 \eqref{cond:unit} and \eqref{cond:isom}. 
In the isometric case, the equality is replaced by an inequality.
As a result, the isometric interpolation condition 
becomes asymmetric: the quadruple $(X,Y,U,V)$ may be isometrically interpolable,
 while the reversed $(U,V,X,Y)$ need not be.
This asymmetry arises because the transpose of an isometric matrix $R$ is not necessarily isometric.}{%
% The difference between the interpolation condition for isometric matrices and unitary matrices is the third inequality.
}

\comN{For the sake of completeness, we recall the theorem on the polar decomposition.
\begin{theorem}[Polar decomposition, Theorem 7.3.1 of \cite{horn2012matrix}]\label{th:polar}
Let \comD{$M$ be} a matrix of size $m\times n$ with $m \ge n$. We have
\begin{equation}
M = {R} \comD{Q}
\end{equation}
where $\comD{Q} = (M^{T}M)^\frac12$ is a symmetric \comD{positive semidefinite} matrix whose eigenvalues are the singular values of $M$ and $R$ is an {isometric matrix}.
\end{theorem}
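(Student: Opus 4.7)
The plan is to reduce the polar decomposition to the (thin) singular value decomposition of $M$. Because $m \ge n$, write $M = U\Sigma V^T$ with $U \in \R^{m\times n}$ having orthonormal columns ($U^T U = I_n$), $V \in \R^{n\times n}$ orthogonal, and $\Sigma \in \R^{n\times n}$ diagonal with the singular values of $M$ on its diagonal.

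The candidates for $R$ and $Q$ emerge by inserting $V^T V = I_n$ into the SVD:
\[
M \;=\; U\Sigma V^T \;=\; (UV^T)\,(V\Sigma V^T) \;\triangleq\; R\,Q.
\]
I would then verify the four required properties in turn. First, $R = UV^T$ is isometric since $R^T R = V U^T U V^T = V V^T = I_n$. Second, $Q = V\Sigma V^T$ is symmetric because $\Sigma$ is diagonal, and positive semidefinite because $\Sigma \succeq 0$ and conjugation by $V$ preserves the PSD cone. Third, the identity $M^T M = V \Sigma U^T U \Sigma V^T = V \Sigma^2 V^T$ exhibits $V \Sigma V^T = Q$ as the (unique) PSD square root of $M^T M$, justifying the notation $Q = (M^T M)^{1/2}$. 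Fourth, since $V$ is orthogonal, the spectrum of $Q$ equals the diagonal of $\Sigma$, i.e.\@ the singular values of $M$.

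There is no serious obstacle here: the only external ingredients are existence of the SVD and uniqueness of the PSD square root of a PSD matrix, both of which are standard. The hypothesis $m \ge n$ is used only to make the thin factor $U$ tall, so that $R = UV^T$ is isometric in the sense of Definition~\ref{def:unitary_isometric_inter} (namely $R^T R = I$ with $R \in \R^{m\times n}$); in the symmetric case $m = n$ one additionally recovers $R$ orthogonal, and in the case $m < n$ one would decompose on the other side so that $R^T$ is isometric instead.
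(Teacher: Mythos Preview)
The paper does not supply its own proof of this theorem: it is simply recalled ``for the sake of completeness'' with a citation to \cite[Theorem~7.3.1]{horn2012matrix}, so there is no in-paper argument to compare against. Your SVD-based derivation is the standard one and is correct; each of the four checks (isometry of $R=UV^T$, symmetry and positive semidefiniteness of $Q=V\Sigma V^T$, the identification $Q=(M^TM)^{1/2}$, and the spectrum claim) goes through exactly as you state.
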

}

%\chadded[id=D,comment={I think the dimension $m\ge n$ is very important that it deserve the notation. We may should discuss it later}]{%
We denote the class of tall and wide linear operators as follows
\[
\Ltal{S}  = \LL{S} \cap \{M\in \mathbb{R}^{m \times n}  \mid m\ge n\} , \qquad
\Lwide{S}  = \LL{S} \cap \{M\in\mathbb{R}^{m \times n}  \mid m\le n\}.
\]
where $S \subseteq [0, +\infty)$.
%}
% The set of isometric matrices is $\Ltal{\{1\}}$.
\repDAug[]{These two classes of linear operators are often explicitly considered in the optimization context. For example, $\Ltal{S}$ appears in compressed sensing, while $\Lwide{S}$ arises in linearly constrained optimization.
The following proposition shows that the interpolation condition of $\Ltal{S}$ and $\Lwide{S}$ can be derived from each other.
\begin{proposition} \label{prop:Ltall_Lwide_swapping}
    The sets of pairs $\{(x_i,y_i)\}_{i\in [N_1]}$ and $\{(u_j,v_j)\}_{j\in [N_2]}$ are $\Lwide{S}$-interpolable if and only if the sets of pairs $\{(u_j,v_j)\}_{j\in [N_2]}$ and $\{(x_i,y_i)\}_{i\in [N_1]}$ are $\Ltal{S}$-interpolable.
\end{proposition}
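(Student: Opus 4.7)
The plan is to observe that transposition $M \mapsto M^{T}$ restricts to a bijection between $\Lwide{S}$ and $\Ltal{S}$, since transposing a matrix preserves its singular values while swapping the roles of ``more rows'' and ``more columns.'' First I would make this bijective correspondence explicit (noting $\Lwide{S}^{T} = \Ltal{S}$), and then unpack Definition~\ref{def:R_mat_inter1} on both sides of the claimed equivalence to see that swapping the two collections of pairs exactly compensates for transposing the interpolating operator.

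Concretely, suppose the pairs $\{(x_i,y_i)\}_{i\in [N_1]}$ and $\{(u_j,v_j)\}_{j\in [N_2]}$ are $\Lwide{S}$-interpolable, witnessed by $M \in \Lwide{S}$ with $y_i = Mx_i$ and $v_j = M^{T} u_j$. Set $M' \triangleq M^{T} \in \Ltal{S}$. Then the same equations read $v_j = M' u_j$ and $y_i = (M')^{T} x_i$, which is precisely the statement that $\{(u_j,v_j)\}_{j\in [N_2]}$ and $\{(x_i,y_i)\}_{i\in [N_1]}$ are $\Ltal{S}$-interpolable per Definition~\ref{def:R_mat_inter1}. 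The converse implication follows by the same argument applied to $M'' \triangleq (M')^{T} \in \Lwide{S}$. There is no substantive obstacle in this proof: its entire content is the bookkeeping observation that Definition~\ref{def:R_mat_inter1} is invariant under the simultaneous swap of the two pair-collections and the transposition of the linear operator, combined with the fact that the classes $\Ltal{S}$ and $\Lwide{S}$ are exchanged by transposition.
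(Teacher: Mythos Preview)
Your proof is correct and follows essentially the same approach as the paper: both rely on the observation that $M \in \Ltal{S}$ if and only if $M^{T} \in \Lwide{S}$ (since transposition preserves singular values while exchanging tall and wide), and then apply Definition~\ref{def:R_mat_inter1} to $M$ and $M^{T}$. Your version is simply a more detailed unpacking of the same two-line argument the paper gives.
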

\begin{proof}
    Observe that $M \in \Ltal{S}$ if and only if $M^T \in \Lwide{S}$, since the transpose does not change the singular spectrum.
    The result then follows directly by applying Definition~\ref{def:R_mat_inter1} to $M$ and $M^T$.
\end{proof}
 % By virtue of this equivalence, it is sufficient to provide the interpolation conditions for just one of the two classes.
}{} 
The new theorem on interpolation conditions for $\Ltal{[\mu,L]}$ \repDOct[]{and $\Lwide{[\mu,L]}$}{} is the following. \repDOct[]{Note that given the inputs and outputs $(X,Y,U,V)$, we know whether the linear operators are tall, wide or square and which class should be interpolated.
}{}
% \todo[inline]{JH: somewhere we should say that a major difference between $L_{[\mu,L]}$ and the result from the previous paper is that we forbid 0 among singular values, so we cannot just "add zeroes"}
\begin{theorem}[$\LL{[\mu,L]}$-interpolation conditions]
\label{thm:L_muL_interpolation_conditon}
When $m\ge n$, the sets of pairs $\{(x_i,y_i)\}_{i\in [N_1]}$ and $\{(u_j,v_j)\}_{j\in [N_2]}$ are $\Ltal{[\mu,L]}$-interpolable if, and only if,
% there exists $Z \in \R^{n\times N_1}$, $$.
% When $X,Y,U,V$ are real, $Z,W$ can be real.
\begin{equation} \label{cond:Ltal}
\exists Z \in \R^{n\times N_1}, W\in \R^{n\times N_2},
\begin{cases}
Y^{T} Y=Z^{T} Z                                           \\
Y^{T} U = Z^{T} W                                         \\
U^{T} U \succeq W^{T} W                                   \\
\vrow{X}{W}^{T} \! \vrow{Z}{V} = \vrow{Z}{V}^{T} \! \vrow{X}{W} \\
\left(\vrow{Z}{V} - \mu\vrow{X}{W}\right)^{T} \! \left(L  \vrow{X}{W} - \vrow{Z}{V} \right)  \succeq 0.
\end{cases}    
\end{equation}
When $m=n$, the third inequality must be satisfied by equality, i.e., $U^{T} U = W^{T} W$.
\repDOct[jh quesiton: should we include a reference to proposition 3.5 in thm 3.6 to have an exhaustive formualtion?]{%
When $m \le n$, the sets of pairs $\{(x_i,y_i)\}_{i\in [N_1]}$ and $\{(u_j,v_j)\}_{j\in [N_2]}$ are $\Lwide{[\mu,L]}$-interpolable if, and only if, the same conditions~\eqref{cond:Ltal} hold after the following swaps: $X \leftrightarrow U$, $Y \leftrightarrow V$, $n \leftrightarrow m$, and $N_1 \leftrightarrow N_2$.
}{}
\end{theorem}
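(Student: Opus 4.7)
The proof rests on the polar decomposition (Theorem~\ref{th:polar}): any $M\in\Ltal{[\mu,L]}$ of shape $m\times n$ with $m\ge n$ factors as $M=RQ$, where $R\in\R^{m\times n}$ is isometric and $Q\in\mathcal{S}_{[\mu,L]}$ is symmetric positive semidefinite, with eigenvalues equal to the singular values of $M$. The key idea is to introduce two auxiliary matrices $Z$ and $W$ that split each of the two data equations $Y=MX$ and $V=M^{T}U$ through this decomposition, namely $Z:=QX$ (so $Y=RZ$) and $W:=R^{T}U$ (so $V=QW$, using $Q^{T}=Q$). These auxiliary variables are precisely the existentially quantified $Z$ and $W$ appearing in~\eqref{cond:Ltal}.

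\textbf{Necessity.} Assume $M\in\Ltal{[\mu,L]}$ interpolates the data and set $Z$, $W$ as above. Then the pairs $\{(z_i,y_i)\}_{i\in[N_1]}$ and $\{(u_j,w_j)\}_{j\in[N_2]}$ are isometric-interpolable via $R$, so Proposition~\ref{th:int_cond_isometric} yields the first three relations of~\eqref{cond:Ltal}. Moreover $Q\in\mathcal{S}_{[\mu,L]}$ satisfies $Z=QX$ and $V=QW$; treating this as a single symmetric interpolation problem with combined input $\vrow{X}{W}$ and output $\vrow{Z}{V}$, Theorem~\ref{th:int_cond_sym} delivers the last two conditions.

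\textbf{Sufficiency.} Conversely, assume that $Z$ and $W$ satisfy~\eqref{cond:Ltal}. The first three relations let Proposition~\ref{th:int_cond_isometric} produce an isometric $R$ with $Y=RZ$ and $W=R^{T}U$; the last two relations let Theorem~\ref{th:int_cond_sym} produce $Q\in\mathcal{S}_{[\mu,L]}$ with $Z=QX$ and $V=QW$. Setting $M:=RQ$ yields $MX=RZ=Y$ and $M^{T}U=QR^{T}U=QW=V$. Because $R^{T}R=I$, we have $M^{T}M=Q^{2}$, so the singular values of $M$ are the (nonnegative) eigenvalues of $Q$, which lie in $[\mu,L]$; hence $M\in\Ltal{[\mu,L]}$. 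For the square subcase $m=n$, the matrix $R$ is now square isometric, hence unitary, so Lemma~\ref{th:int_cond_unit} replaces the inequality $U^{T}U\succeq W^{T}W$ by the equality $U^{T}U=W^{T}W$. The wide case $m\le n$ follows immediately by applying the tall result to $M^{T}$ through Proposition~\ref{prop:Ltall_Lwide_swapping}.

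\textbf{Main obstacle.} The argument is essentially a bookkeeping exercise once the correct auxiliary variables $Z=QX$ and $W=R^{T}U$ are introduced; the conceptual step is guessing this split. The subtle point is the need to stack $X$ and $W$ \emph{together} as inputs of $Q$ (and $Z$, $V$ together as outputs), so that a single application of Theorem~\ref{th:int_cond_sym} simultaneously captures the action of $Q$ on $X$ and its action on $W$, producing one common symmetric operator that glues together with $R$ into the desired $M$.
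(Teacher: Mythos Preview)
Your proof is correct and follows essentially the same approach as the paper's: both use the polar decomposition $M=RQ$, introduce the auxiliary variables $Z=QX$ and $W=R^{T}U$, and then invoke the isometric interpolation conditions (Proposition~\ref{th:int_cond_isometric}) for the first three relations and the symmetric $\mathcal{S}_{[\mu,L]}$-interpolation conditions (Theorem~\ref{th:int_cond_sym}) for the last two. The only cosmetic difference is that the paper phrases the argument as a single chain of equivalences, whereas you split it explicitly into necessity and sufficiency and spell out why $M=RQ$ has singular values in $[\mu,L]$ via $M^{T}M=Q^{2}$.
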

\begin{proof}
Consider the polar decomposition (Theorem~\ref{th:polar}) of the matrix $M=RQ$ where $Q \in \SSS{[\mu,L]}$, and $R$ is an isometric matrix.
\begin{alignat*}{2}
\exists M \in \LL{[\mu,L]}^{\mathrm{tall}} :
\left\{\begin{aligned}
       Y & = MX    \\
       V & = M^{T} U
       \end{aligned}\right.
 & \Leftrightarrow \exists R \text{ isometric, } Q \in \SSS{[\mu,L]} :
\begin{cases}
Y = R Q X, \\
V = Q R^{T}  U
\end{cases}                                                                                                                          \\    %1
 & \Leftrightarrow \exists R \text{ isometric, } Q \in \SSS{[\mu,L]},Z,W :
\begin{cases}
Y = R Z, \\
Z = Q X  \\
V = Q W  \\
W = R^{T}  U
\end{cases}                                                                                                                          \\    %2
 & \Leftrightarrow   \exists Z,W
\begin{cases}
(Z, Y, U, W) \text{ is isometric-interpolable,} \\
\vrow{\vrow{X}{W}}{\vrow{Z}{V}} \text{ is } \SSS{[\mu,L]} \text{-interpolable,}
\end{cases} \\    %3
\end{alignat*}%
\[ \overset{\text{Th.\@ \ref{th:int_cond_sym} and \ref{th:int_cond_isometric}}}{
\Leftrightarrow
}\ \exists Z, W
\begin{cases}
Y^{T} Y=Z^{T} Z                                           \\
Y^{T} U = Z^{T} W                                         \\
U^{T} U \succeq W^{T} W                                   \\
\vrow{X}{W}^{T} \vrow{Z}{V} = \vrow{Z}{V}^{T} \vrow{X}{W} \\
\left(\vrow{Z}{V} - \mu\vrow{X}{W}\right)^{T} \left(L  \vrow{X}{W} - \vrow{Z}{V} \right)  \succeq 0.
\end{cases}
\]
When $m=n$, $R$ is unitary and we can use Theorem~\ref{th:int_cond_unit}, therefore, the third inequality becomes the equality $U^{T} U = W^{T} W$. \\
\repDOct[]{%
For the case $m\le n$, by Proposition~\ref{prop:Ltall_Lwide_swapping}, the sets of pairs $\{(x_i,y_i)\}_{i\in [N_1]}$ and $\{(u_j,v_j)\}_{j\in [N_2]}$ are $\Lwide{[\mu,L]}$-interpolable if, and only if, the reversed pair $\{(u_j, v_j)\}_{j \in [N_2]}$ and $\{(x_i, y_i)\}_{i \in [N_1]}$ is $\Ltal{[\mu,L]}$-interpolable. Applying \eqref{cond:Ltal} to the latter statement yields the desired conditions.
}{}
\end{proof}

\repDOct[]{We can make several observations concerning the new interpolation conditions. 
The overall approach is based on the polar decomposition.
The first three lines of \eqref{cond:Ltal} are related to the isometric interpolability of $Y$, $U$ together with auxiliary intermediate variables $Z$ and $W$. The last two lines of \eqref{cond:Ltal} correspond to the $\SSS{[\mu,L]}$-interpolability of some augmented input-output matrices. This theorem extend the previous work \cite{bousselmi2024interpolation}}{}
Note that these interpolation conditions are convex constraints on Gram matrices of vectors $(X,Y,U,V)$ and the additional vectors $(Z,\comD{W})$, which is of the form \eqref{extended_gram}.

\section{Interpolation conditions for union of eigenvalues and singular values}\label{sect:union}
%\todo[inline]{only symmetric operator?}
In this section, we propose an expression of interpolation conditions for \comN{symmetric} linear operators whose eigenvalues belong to a union of subsets of $\R$.
% We only present the symmetric case for simplicity, but the general case follows the same reasoning. 
The idea is to sum the Gram matrices associated with each subset.
% \chcomment[id=D]{Should we extend this section to symmetric and general linear operators according to Julien's comments?}
\begin{theorem}\label{th:union}
Let $S_1,S_2\subseteq \R$. 
$G\in \GM{\Delta(S_1 \cup S_2)}$ if, and only if, $G = G_1 + G_2$ for some $G_i \in \GM{\Delta(S_i)}$.
\repDAug[]{%
$(G,H)\in \GMM{\Delta(S_1 \cup S_2)}$ if, and only if, $G = G_1 + G_2$ and $H=H_1 + H_2$ for some $(G_i, H_i) \in \GMM{\Delta(S_i)}$, $i=1,2$.
}{}
\begin{proof} Let $G\in \mathcal{G}_{\Delta(S_1 \cup S_2)}$. Therefore, $G$ is associated with some matrix $Q$ with eigenvalues in $S_1 \cup S_2$ (Definition \ref{def:associated}) and 
using Lemma \ref{lem:unitary_symmetric}-1,
% (or Lemma \ref{lem:unitary}-2 in the nonsymmetric case), 
we can choose $Q$ with the a block-diagonal form
% we can choose $Q$ with the following form (Lemma \ref{lem:unitary})
%\todo[inline]{TODO2: "spell out precisely" Lemma 3.1}
%\todo[inline]{TODO3: people may not be super familiar with the fact that adding the gram matrices is equivalent to having this block structure in line 1. Maybe having a lemma somewhere for the link between line 1 and 2?}
% \begin{align*}
% G\in \GM{\Delta(S_1 \cup S_2)} & \Leftrightarrow \GramTwo{\vcol{X_1}{X_2}}{\vcol{Y_1}{Y_2}}, \vcol{Y_1}{Y_2} = \vmat{Q_1}{0}{0}{Q_2} \vcol{X_1}{X_2}, & \comD{\lambda(Q_i) \in \Delta(S_i)} \\
%                                         & \Leftrightarrow G = \GramTwo{X_1}{Y_1} + \GramTwo{X_2}{Y_2}, Y_i = Q_i X_i,                                          & \comD{\lambda(Q_i) \in \Delta(S_i)} \\
%                                         & \Leftrightarrow G = G_1 + G_2 \text{ for some } G_i\in \GM{\Delta(S_i)}
% \end{align*}
% \comN{since the sum of Gram matrices associated with linear operators $Q_1$ and $Q_2$ is associated with linear operator $\vmat{Q_1}{0}{0}{Q_2}$}.
\repDAug[]{%
\begin{align*}
& G\in \GM{\Delta(S_1 \cup S_2)} 
\\ & \Leftrightarrow G=\GramFour{\vcol{X_1}{X_2}}{\vcol{Y_1}{Y_2}}{\vcol{U_1}{U_2}}{\vcol{V_1}{V_2}} \text{ for some } X_i,\ Y_i,\ U_i,\ V_i,\ i=1,2,
\\ & \quad \text{ and } \exists Q_1, Q_2 : \vcol{Y_1}{Y_2} = \vmat{Q_1}{0}{0}{Q_2} \vcol{X_1}{X_2}, \vcol{V_1}{V_2} = \vmat{Q_1}{0}{0}{Q_2}^T \vcol{U_1}{U_2}, \  \comD{\lambda(Q_i) \in \Delta(S_i)} 
\\ & \Leftrightarrow G = \GramFour{X_1}{Y_1}{U_1}{V_1} + \GramFour{X_2}{Y_2}{U_2}{V_2} \text{ for some } X_i,\ Y_i,\ U_i,\ V_i,\ i=1,2,
\\ & \quad \text{ and } \exists Q_1, Q_2 :  Y_i = Q_i X_i,\ V_i = Q_i^T U_i, \text{ and }   \lambda(Q_i) \in \Delta(S_i)
\\ & \Leftrightarrow G = G_1 + G_2 \text{ for some } G_i\in \GM{\Delta(S_i)},\ i=1,\ 2
\end{align*}
% \comN{since the sum of Gram matrices associated with linear operators $Q_1$ and $Q_2$ is associated with linear operator $\vmat{Q_1}{0}{0}{Q_2}$}.
For the second statement,
 let $(G,H) \in \GMM{\Delta(S_1 \cup S_2)}$. Therefore, $(G,H)$ is associated with some matrix $Q$ with singular values in $S_1 \cup S_2$. Using Lemma \ref{lem:unitary}-2, similarly,
\begin{align*}
   & (G,H) \in \GMM{\Delta(S_1 \cup S_2)} 
\\ & \Leftrightarrow G=\GramTwo{\vcol{X_1}{X_2}}{\vcol{V_1}{V_2}}, H=\GramTwo{\vcol{Y_1}{Y_2}}{\vcol{U_1}{U_2}}, \text{ for some } X_i,\ Y_i,\ U_i,\ V_i,
\\ & \quad \text{ and } \exists Q_1, Q_2 : \vcol{Y_1}{Y_2} = \vmat{Q_1}{0}{0}{Q_2} \vcol{X_1}{X_2}, \vcol{V_1}{V_2} = \vmat{Q_1}{0}{0}{Q_2}^T \vcol{U_1}{U_2}, \ \comD{\lambda(Q_i) \in \Delta(S_i)}
\\ & \Leftrightarrow G = \GramTwo{X_1}{V_1} + \GramTwo{X_2}{V_2},\  H = \GramTwo{Y_1}{U_1} + \GramTwo{Y_2}{U_2},
\\ & \quad \ \text{for some } X_i,\ Y_i,\ U_i,\ V_i, i=1,2 \text{ and } \exists Q_1, Q_2:
\\ & \quad \ Y_i = Q_i X_i, \ V_i = Q_i^T U_i,\comD{\lambda(Q_i) \in \Delta(S_i)}, \ i=1, 2
\\ & \Leftrightarrow G = G_1 + G_2 \text{ for some } G_i\in \GM{\Delta(S_i)}, \ i=1,2.
\end{align*}%
}{}
\end{proof}
\end{theorem}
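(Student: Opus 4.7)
The plan is to handle both statements by a direct ``split and reassemble'' argument, exploiting the unitary invariance of Gram matrices (Lemma~\ref{lem:unitary}) to move freely between a concrete operator and any unitarily equivalent block-diagonal version.

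For the ``if'' direction of both statements, I would proceed by explicit construction. Given $G_i \in \GM{\Delta(S_i)}$ (resp.\ $(G_i,H_i)\in \GMM{\Delta(S_i)}$) realized by data $X_i, Y_i, U_i, V_i$ and a matrix $Q_i$ with $\lambda(Q_i) \in \Delta(S_i)$ (resp.\ $\sigma(Q_i) \in \Delta(S_i)$), I would stack vertically to form $X = \begin{pmatrix} X_1 \\ X_2\end{pmatrix}$ and similarly for $Y, U, V$, and set $Q = \mathrm{diag}(Q_1, Q_2)$. Block multiplication gives $Y = QX$ and $V = Q^T U$, the block-diagonal structure forces the combined spectrum to sit in $\Delta(S_1\cup S_2)$, and the additivity $G = G_1 + G_2$ (and $H = H_1 + H_2$ in the rectangular case) drops out by expanding the Gram of a vertically stacked matrix.

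For the ``only if'' direction, the key step is to exhibit the splitting. In the symmetric case, given $G$ associated with some $Q$ whose eigenvalues lie in $S_1 \cup S_2$, I would invoke the spectral theorem and partition the eigenvalues of $Q$ into those lying in $S_1$ and those lying in $S_2$ (eigenvalues in the intersection may be assigned arbitrarily to either side). After reordering, $Q$ is orthogonally equivalent to $\mathrm{diag}(Q_1, Q_2)$ with $\lambda(Q_i) \in \Delta(S_i)$. By statement~1 of Lemma~\ref{lem:unitary}, this diagonalization leaves $G$ unchanged while partitioning the rows of $X, Y, U, V$ into two contiguous blocks; reading off the two halves separately produces $G_1, G_2$ with $G = G_1 + G_2$. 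The non-symmetric case follows along the same lines, using the singular value decomposition in place of the spectral theorem and invoking statement~2 of Lemma~\ref{lem:unitary} to absorb the two unitary factors on the left and the right.

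The step I expect to be most delicate is ensuring that, after block-diagonalizing, the existing data $X, Y, U, V$ genuinely splits so that the top half realizes $G_1 \in \GM{\Delta(S_1)}$ (respectively $\GMM{\Delta(S_1)}$) and the bottom half realizes $G_2$, rather than merely yielding an abstract sum-of-squares identity. This is precisely what unitary invariance provides: we may replace the original data by its image under the diagonalizing unitaries without altering the Gram matrix, so the block partition of $Q$ is literally inherited by the rows of the data and the two halves become bona fide Gram representations of the two spectral pieces.
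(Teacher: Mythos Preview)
Your proposal is correct and follows essentially the same approach as the paper: both use unitary invariance (Lemma~\ref{lem:unitary}) to reduce to a block-diagonal operator $Q=\mathrm{diag}(Q_1,Q_2)$ with $\lambda(Q_i)\in\Delta(S_i)$ (resp.\ $\sigma(Q_i)\in\Delta(S_i)$), after which the Gram matrix splits additively along the corresponding row partition of the data. The paper packages both directions into a single chain of equivalences, whereas you separate the ``if'' and ``only if'' directions, but the content is identical.
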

We already know interpolations for symmetric linear operators with eigenvalues on a given interval on $\mathbb{R}$,
\repDAug{%
and general linear operators with singular values on a given interval on $[0,+\infty)$.
}{}
Using the above theorem, Theorem~\ref{th:union_XY} and Theorem~\ref{th:int_cond_non_sym}, we can develop interpolation conditions for symmetric linear operators with eigenvalues on any (finite) union of intervals on $\mathbb{R}$
\repDAug{and interpolation conditions for general linear operators with singular values on any (finite) union of intervals on $[0,+\infty)$}{}.
% \comDJune[TODO: State result of ``union of singular values'' explicitly?]{}

For example, this theorem allows to \comN{develop} interpolation conditions for symmetric linear operators with eigenvalues equal to either $-\mu$ or $\mu$.
\begin{corollary}\label{cor:interp_mumu}
Let $\mu \in \R$. \repDOctui[]{$(X,Y)$ is $\SSS{\{-\mu,\mu\}}$-interpolable}{$(X,Y) \in \mathcal{S}_{\{ -\mu, \mu \}}$} if, and only if,

\begin{equation} \label{int_mu_-mu}
\begin{cases}
\repDOct[]{%
    X^T Y = Y^T X,
\\  \mu^2 X^T X = Y^T Y.
}{}
\end{cases}
\end{equation}

\begin{proof}
\repDOct[]{%
The equalities in \eqref{int_mu_-mu} is equivalent to the statement that the Gram matrix $G=\GramTwo XY$ has the block form
\begin{equation}\label{int_mu_-mu_gram}
G = \vmat{A}{B}{B}{\mu^2 A} \succeq 0, \text{ for some } A \succeq 0, \ B=B^T.
\end{equation}
Indeed, the first condition in \eqref{int_mu_-mu} forces the off‑diagonal blocks to be symmetric, while the second condition makes the lower‑right block equal to the upper-left block. Setting \(A:=X^T X\) and \(B:=X^T Y\) yields \eqref{int_mu_-mu_gram}.
}{}
%\todo[inline]{TODO5: restructuring the proof (see comment JH)}

\repDOctui[]{Assume that $(X,Y)$ is $\SSS{\{-\mu,\mu\}}$-interpolable.}{} By Theorems \ref{th:union} and \ref{th:union_XY}, we have $G \in \GM{\Delta(\{\mu\} \cup \{ -\mu\}  )}$ if, and only if, $G = G_1 + G_2$ with $G_1 \in \GM{\Delta(\{ \mu \} )}$ and $G_2 \in \GM{\Delta( \{ -\mu \} )}$. And, \repDOctui[]{by Theorem~\ref{th:int_cond_sym},}{} one can show that \repDOctui[]{for any $\alpha \in \R$,}{} 
\begin{equation}\label{int_alpha}
G_i \in \GM{\Delta( \{ \alpha \} )}
\text{ if, and only if }
G_i = \vmat{A}{\alpha A}{\alpha A}{\alpha^2 A}, \ A \succeq 0.
\end{equation}
\repDOctui[]{Applying \eqref{int_alpha} with $\alpha=-\mu$ and $\alpha=\mu$ yields}{%
%Suppose now that $G \in \GM{\Delta( \{\mu\} \cup \{ -\mu\}  )}$ and therefore that
}
\begin{align}
G = G_1 + G_2 = \vmat{A_1}{\mu A_1}{\mu A_1}{\mu^2 A_1} + \vmat{A_2}{-\mu A_2}{-\mu A_2}{\mu^2 A_2} = \vmat{A_1+A_2}{\mu(A_1-A_2)}{\mu(A_1-A_2)}{\mu^2 (A_1+A_2)}
\end{align}
for some $A_1$ and $A_2$. Hence $G$ \repDOct[]{can be written as \eqref{int_mu_-mu_gram}}{} with $A = A_1 + A_2$ and $B=\mu(A_1-A_2)$.

On the other hand, \repDOctui[]{assume that $G$ can be written as in \eqref{int_mu_-mu_gram}.}{%
%if $G = \vmat{A}{B}{B}{\mu^2 A}$ for some $A$ and $B$, then
}
Define $A_1 = \frac12 \left( A+\frac{B}{\mu} \right)$ and $A_2 = \frac12 \left( A-\frac{B}{\mu} \right)$.
\repDOct[]{Because $G\succeq 0$, the matrices $A_1$, $A_2$ are positive semidefinite since for any vector $x$, $\vcol{x}{\pm \frac{1}{\mu} x}^T \! \vmat{A}{B}{B}{\mu^2 A}  \vcol{x}{\pm \frac{1}{\mu} x} = x^T A_1 x \text{ or } x^T A_2 x \ge 0$.}{}
Using these matrices, we have $G=G_1+G_2$ with $G_1 = \vmat{A_1}{\mu A_1}{\mu A_1}{\mu^2 A_1}$ and $G_2 = \vmat{A_2}{-\mu A_2}{-\mu A_2}{\mu^2 A_2}$.
\repDOctui[]{By \eqref{int_alpha}, each summand belongs to \(\GM{\Delta(\{\mu\})}\) and
\(\GM{\Delta(\{-\mu\})}\), respectively. Hence \(G\in
\GM{\Delta(\{-\mu,\mu\})}\), which means that \((X,Y)\) is
\(\mathcal{S}_{\{-\mu,\mu\}}\)-interpolable.
}{}
%   By Theorem \ref{th:union} and \ref{th:union_XY}, we have $G = \Gram XY \in \GM{\Lambda_{ \{\mu\} \cup \{ -\mu\}  }} \Leftrightarrow G = G_1 + G_2$ with $G_1 \in \GM{\Lambda_{ \{ \mu \} }}$ and $G_2 \in \GM{\Lambda_{ \{ -\mu \} }}$. Moreover, a Gram matrix $G_i$ belongs to $\GM{\Lambda_{ \{ \alpha \} }}$ if and only if it has  
%   \begin{equation}
%       G_i = \vmat{A}{\alpha A}{\alpha A}{\alpha^2 A}
%   \end{equation}
%   for some $A \succeq 0$. Therefore, we must show that 
%   \begin{align}
%       G = \vmat{A}{B}{B}{\mu^2 A} \succeq 0 \Leftrightarrow G & = \vmat{A_1}{\mu A_1}{\mu A_1}{\mu^2 A_1} + \vmat{A_2}{-\mu A_2}{-\mu A_2}{\mu^2 A_2} \\
%       & = \vmat{A_1+A_2}{\mu(A_1-A_2)}{\mu(A_1-A_2)}{\mu^2 (A_1+A_2)}
%   \end{align}
%  for some $A_1,A_2 \succeq 0$. We have $A_1 = \frac12 \left( A+\frac{B}{\mu} \right)$ and $A_2 = \frac12 \left( A-\frac{B}{\mu} \right)$. When $A_1,A_2 \succeq 0$ then $G_1,G_2\succeq 0$ and $G=G_1+G_2 \succeq 0$. When $G\succeq 0$ then
%    \begin{align}
%        \vrow{\pm I}{\frac{I}{\mu}} \vmat{A}{B}{B}{\mu^2 A} \vcol{\pm I}{\frac{I}{\mu}} = 2A \pm 2\frac{B}{\mu} \succeq 0 \Rightarrow A_1,A_2 \succeq 0.
%    \end{align}
\end{proof}
\end{corollary}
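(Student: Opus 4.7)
The plan is to reduce $\SSS{\{-\mu,\mu\}}$-interpolability of $(X,Y)$ to a Gram matrix condition via Theorem~\ref{th:union_XY}, decompose the spectrum $\{-\mu,\mu\}$ into its two singletons using Theorem~\ref{th:union}, treat each singleton case directly with Theorem~\ref{th:int_cond_sym}, and then reassemble. Before invoking any decomposition, I would first recast \eqref{int_mu_-mu} in block form: the identity $X^T Y = Y^T X$ forces the off-diagonal block of $G = \GramTwo XY$ to be symmetric, and $\mu^2 X^T X = Y^T Y$ forces the lower-right block to equal $\mu^2$ times the upper-left block. Together with the automatic positive semidefiniteness of $G$, the system \eqref{int_mu_-mu} becomes equivalent to $G = \vmat{A}{B}{B}{\mu^2 A}$ for some $A \succeq 0$ and symmetric $B$.

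Next I would apply Theorem~\ref{th:union_XY} to translate interpolability into $G \in \GM{\Delta(\{-\mu,\mu\})}$, and Theorem~\ref{th:union} to split this as $G = G_1 + G_2$ with $G_i \in \GM{\Delta(\{\pm \mu\})}$. The singleton cases are straightforward: Theorem~\ref{th:int_cond_sym} applied with the degenerate interval $[\alpha,\alpha]$ collapses to $Y = \alpha X$, so $G_i \in \GM{\Delta(\{\alpha\})}$ exactly when $G_i = \vmat{A_i}{\alpha A_i}{\alpha A_i}{\alpha^2 A_i}$ for some $A_i \succeq 0$. The forward direction then follows by summing these two explicit blocks with $\alpha = \pm\mu$, which reads off $A = A_1 + A_2$ and $B = \mu(A_1 - A_2)$.

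For the converse, starting from $G$ of the asserted block shape, I would define $A_1 = \tfrac12(A + B/\mu)$ and $A_2 = \tfrac12(A - B/\mu)$, so that algebraically $G = G_1 + G_2$ with each $G_i$ of the singleton type identified above. The nontrivial step is to verify $A_1, A_2 \succeq 0$, which I would obtain by testing $G \succeq 0$ against vectors of the form $\vcol{x}{\pm x/\mu}$: the specific $\mu^2$-scaling of the lower-right block produces, up to a factor of $4$, precisely $x^T A_{1,2} x$ on the right-hand side.

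The main obstacle is this positive semidefiniteness check; it is where the exact coefficient $\mu^2$ (rather than an arbitrary constant) in the block form is crucial, since the test-vector trick hinges on that cancellation to remove the $B$-contributions with the right sign. A minor caveat is the boundary case $\mu = 0$, where $\{-\mu,\mu\} = \{0\}$ reduces to a singleton and must be treated separately to avoid the division by $\mu$; otherwise the argument is essentially bookkeeping built from the union-decomposition and singleton interpolation tools already at hand.
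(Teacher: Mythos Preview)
Your proposal is correct and follows essentially the same route as the paper: recast \eqref{int_mu_-mu} as the block form of $G$, invoke Theorems~\ref{th:union_XY} and~\ref{th:union} to split $G=G_1+G_2$, characterize the singleton cases via Theorem~\ref{th:int_cond_sym}, and for the converse set $A_{1,2}=\tfrac12(A\pm B/\mu)$ with positivity checked by testing $G\succeq 0$ on $\vcol{x}{\pm x/\mu}$. Your explicit mention of the factor $4$ in that quadratic-form computation and of the degenerate case $\mu=0$ (which the paper's argument silently skips) are small but welcome refinements.
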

\repDOct[jh: make the difference more explicitly by re-writing them in the same way?
$G = (A B; B C)$ with \\
(translating the conditions) $mu B + mu^2 A - C - mu B >=0$ i.e. $mu^2 A >= C$ while in (23) there is an equality]{%
The interpolation condition for symmetric linear operators with eigenvalues in $\{-\mu, \mu\}$ (Corollary~\ref{cor:interp_mumu}) differs from the corresponding condition for $[-\mu, \mu]$ (Theorem~\ref{th:int_cond_sym}), which is given by
\[
\begin{cases}
X^T Y = Y^T X, \\
\mu^2 X^T X \succeq Y^T Y.
\end{cases}
\]
In the former case, the second condition becomes an equality.
% \[
% \begin{cases}
% X^T Y = Y^T X, \\
% (Y+\mu X)^T(\mu X-Y) \succeq 0.
% \end{cases}
% \]
% The second inequality becomes an equality.
% \[\GramTwo{X}{Y} = \vmat{A}{B}{B}{C} \succeq 0 \quad \text{ with } \mu^2 A \succeq C \]
% for some $A, C\succeq 0$, and $B$. 
}{%
% We observe that there is a difference between the interpolation conditions for symmetric linear operators with eigenvalues in $\{-\mu,\mu \}$ (Corollary~\ref{cor:interp_mumu}) and in $[-\mu,\mu]$ (\comN{Theorem \ref{th:int_cond_sym}})
}

%%====================================================%%
%% % 5. Application to Performance Estimation and ... %%
%%====================================================%%

\section{Application to Performance Estimation and worst-case analysis}\label{sect:applications}
The interpolation conditions derived in Theorems \ref{thm:L_muL_interpolation_conditon}, \ref{th:union} and \ref{th:union_XY}
% \chcomment[id=D]{Is it Thorem~\ref{th:union} or \ref{th:union_XY}}
allow to analyze the worst-case performance of optimization methods with PEP on problems of the forms $\min_x g(Mx)$ or $\min_x f(x) + g(Mx)$ where $M \in \LL{I}$ or $M \in \SSS{I}$ where $I$ is any subset of $\R.$ We propose to analyze the Gradient method on the first problem and the Chambolle-Pock method on the second problem.

\subsection{Worst-case performance of Gradient Method on smooth convex functions}
The exact convergence rate of Gradient Method (GM) on smooth convex functions is an active research area \cite{nesterov2018lectures,drori2014performance,taylor2017smooth,rotaru2024exact}. The work \cite{bousselmi2024interpolation}
% analyzed the worst-case performance of GM with fixed steps $\frac{h}{L}$ on the problem
considered the problem of the form
\begin{equation}
\min_x F(x) \triangleq g(Mx)
\end{equation}
where $g$ is $\mu_g$-strongly convex and $L_g$-smooth and $M\in \LL{[\mu_M,L_M]}$ (we say that $F \in \mathcal{C}_{\mu_g,L_g}^{\mu_M,L_M}$) or $M \in \SSS{[\mu_M,L_M]}$ (we say that $F\in \mathcal{D}_{\mu_g,L_g}^{\mu_M,L_M}$). They consider GM with fixed stepsize $h/L$
\begin{equation}
x_{k+1} = x_k - \frac{h}{L} \nabla F(x) = x_k - \frac{h}{L} M^T \nabla g(Mx) \tag{GM}
\end{equation}
where $L$ is the Lipschitz constant of the gradient of $F$. They characterized the worst-case behavior of GM on the classes $\mathcal{C}_{\mu_g,L_g}^{0,L_M}$ and $\mathcal{D}_{\mu_g,L_g}^{\mu_M,L_M}$ with $w(\mathcal{F},h/L)$ being the worst-case behavior of GM with step size $h/L$ on the class $\mathcal{F}$ with respect to the criterion $F(x_N) - F(x^*)$ and bounded initial distance $\|x_0 - x^*\| \leq 1$ on \cite[Conjecture 4.2]{bousselmi2024interpolation}.

Prior to this work, computing the tight performance on $\mathcal{C}_{\mu_g,L_g}^{\mu_M,L_M}$ for $\mu_M >0$ is impossible due to the unknown interpolation conditions for $\LL{[\mu_M,L_M]}$. Now that we have them (Theorem \ref{thm:L_muL_interpolation_conditon}), we can extend their analysis of \cite{bousselmi2024interpolation} to $\mathcal{C}_{\mu_g,L_g}^{\mu_M,L_M}$. 
\subsubsection{Performance on linear operators with non-zero minimal singular value}
We compute the worst-case performance of GM when the linear operater $M$ is with maximal and nonzero minimal bound on the singular values. 
We observed that both in the tall and wide case, the worst-case performance of GM is exactly the same as the worst-case performance of GM on $\mathcal{D}_{\mu_g,L_g}^{\mu_M,L_M}$, namely, the symmetry of $M$ has no impact on the worst-case performance.

\subsubsection{Performance on linear operators with eigenvalues on a union of intervals}
We compute the worst-case performance of GM when the linear operator $M$ has eigenvalues in a union of intervals. Figure \ref{fig:union} compares the worst-case linear operator of GM when varying the step size $h$ when $M \in \SSS{[0.6, 1]}$ (red dots) and $M \in \SSS{[0.6, 0.65] \cup [0.95, 1]}$ (blue dots) which appears to be both scalar. In other words, either we include the whole interval $[0.6, 1]$, or we exclude the sub-interval $[0.65, 0.95]$.

\begin{figure}[H]
\centering
\includegraphics[width=\linewidth]{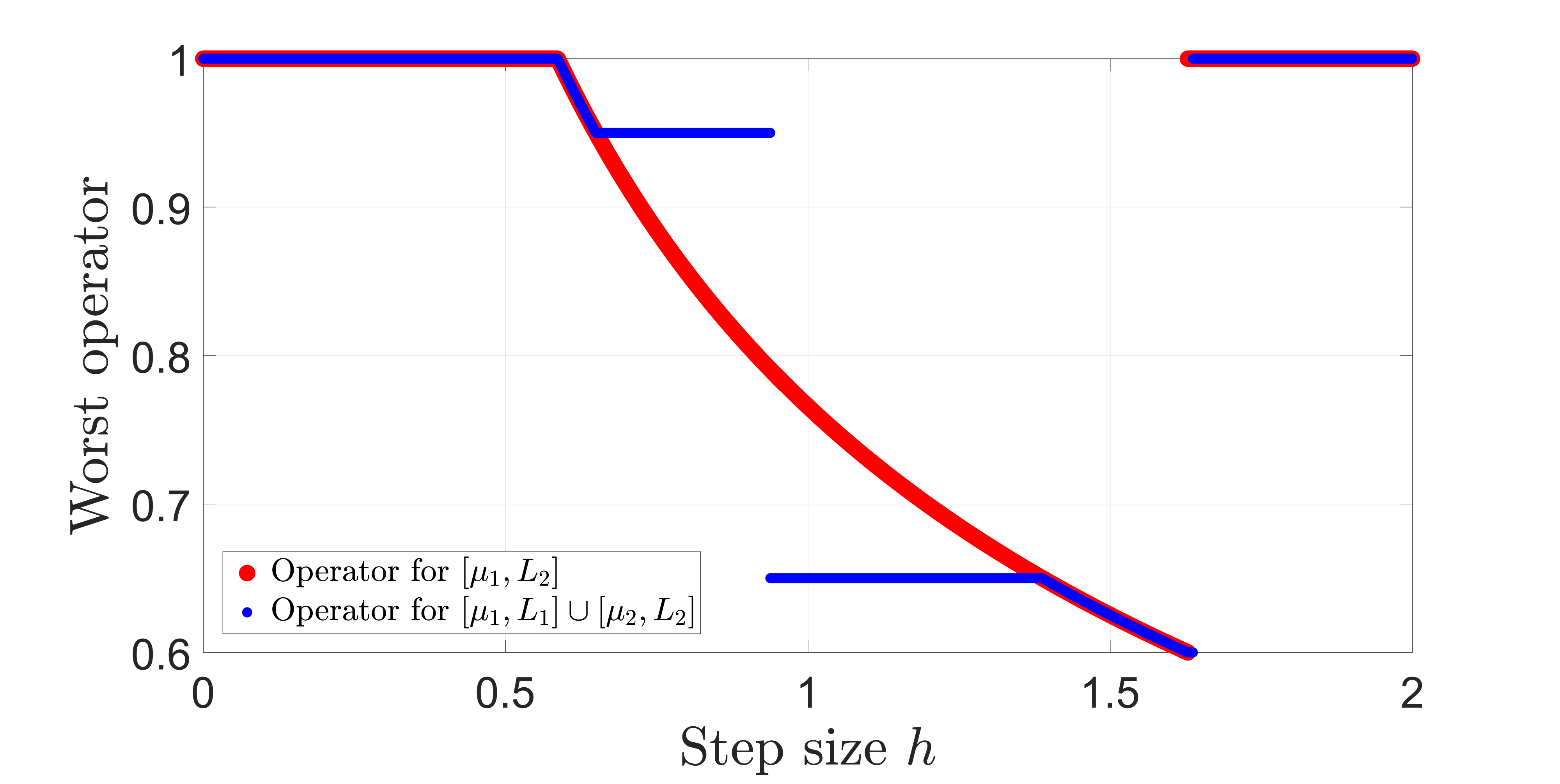}
\caption{Comparison of the worst-case linear operators when allowing the whole interval (red curve) or only the union of two sub-intervals (blue curve).}
\label{fig:union}
\end{figure}

Interestingly, the worst-case linear operator remains a scalar. This could not be observed without our new interpolation conditions.

\subsection{Chambolle-Pock Method for composite optimization problem}
Another application of our interpolation conditions is the analysis of the Chambolle-Pock method (CP) \cite{chambolle2011first} on the composite optimization problem
\[ \min_x f(x) + g(Mx) \]
where $f,g$ are proximable convex functions. The iterations are the following, with parameters $\tau, \sigma > 0$
\[ \left\{\begin{array}{l}
x_{i+1}=\operatorname{prox}_{\tau f}\left(x_i-\tau M^T u_i\right), \\
u_{i+1}=\operatorname{prox}_{\sigma g^*}\left(u_i+\sigma M\left(2 x_{i+1}-x_i\right)\right).
\end{array}\right. \]
The work \cite{bousselmi2024interpolation} also analyzed the worst-case behavior of CP on different settings with PEP. More precisely, they improved \cite[Theorem 1]{chambolle2016ergodic} the convergence rate of the primal-dual gap of the average iterations produced by CP (see \cite[Theorem 4.6]{bousselmi2024interpolation}) when $M \in \LL{[0,L_M]}$ for some $L_M > 0$. Note that this classical result requires a bound on
\begin{equation}
\frac{1}{\tau} \lVert x-x_0 \rVert^2 + \frac{1}{\sigma} \lVert u-u_0 \rVert^2 - 2(u-u_0)^T M (x-x_0)
\end{equation}
and \cite{bousselmi2024interpolation} also considered the bound
\begin{equation}
\frac{1}{\tau} \lVert x-x_0 \rVert^2 + \frac{1}{\sigma} \lVert u-u_0 \rVert^2.
\end{equation}
Thanks to our new interpolation conditions (Theorem \ref{th:int_cond_non_sym}), we can observe the impact of bounding the minimal singular values of the linear operators $M$. % of the class considered.
%A classical initial condition is $R^2 \ge  \frac{1}{\tau} \lVert x-x_0 \rVert^2 + \frac{1}{\sigma} \lVert u-u_0 \rVert^2 - 2(u-u_0)^T M (x-x_0) $.
%Classical convergence rate is given in \cite{chambolle2011first} and then \cite{bousselmi2024interpolation} refines the result and gets exact worst-case convergence rate for the ergodic
%primal-dual gap given only maximal eigenvalue bound on $M$. Here 
We found that considering the lower bound of singular value on $M$ does not improve the convergence rate in both settings.
% \begin{remark}
% Again, this is not trivial at all to determine. It could have been that the worst-case linear operator depends on the minimal singular value.
% \addDJune[JH: Rephrase]{}
% \end{remark}
%\todo{notation for the Lagrangian function should be chosen since the letter $L, \L$ is already used for several times.}

%%====================================================%%
%% 6. Conclusion                                      %%
%%====================================================%%

\section{Conclusion}
Interpolation conditions for linear operators and PEP allow to analyze the performance of optimization methods on problems of the form $\min_x g(Mx)$ and $\min_x f(x) + g(Mx)$ and other problems. We used to have interpolation conditions for linear operators and their transpose characterized by singular values in $[0,L]$ and for symmetric linear operators characterized by eigenvalues in $[\mu,L]$.

In this work, we developed interpolation conditions for the class of linear operators with singular values between $\mu$ and $L$ and for symmetric linear operators characterized by any finite union of intervals for eigenvalues. The reasoning to obtain interpolation conditions for linear operators with singular values in a finite union of intervals is the same but we did not present it. We illustrated the application of our new interpolation conditions for the worst-case analysis of the gradient method and Chambolle-Pock methods via the Performance Estimation Problem framework.

We also showed that we can only characterize classes of linear operators \comN{whose} singular values are in a given subset of $\R$ and \comN{classes of symmetric linear operators whose eigenvalues are in a given subset of $\R$ through Gram representation}.

Therefore, there are the following open questions:
\begin{itemize}
 \item Can we develop interpolation conditions for linear operators whose singular values or eigenvalues belong to arbitrary sets that cannot be expressed as a finite union of intervals?
 \item Can we develop interpolation conditions for general square matrices, and which properties could we require?
 \item Is there another convex representation of classes of linear operators that does not rely on Gram representations?
\end{itemize}

\appendix

%%====================================================%%
%% Appendix                                           %%
%%====================================================%%

\section{Proofs of Theorem \ref{th:limit}}\label{app:proof_limit}

\begin{lemma}\label{lem:cone_only}
Let a set of spectrum $\Lambda$. $\GMM{\Lambda}$ and $\GM{\Lambda}$ are cones, namely,
\begin{align}
c (G,H) \in \GMM \Lambda &  & \forall c \geq 0, \forall (G,H) \in \GMM{\Lambda} \\
c G \in \GM \Lambda      &  & \forall c \geq 0, \forall G \in \GM{\Lambda}.
\end{align}
\begin{proof} For $c>0$, we have
\begin{alignat*}{3}
(G,H) \in \GMM{\Lambda} & \Rightarrow \exists X,U,M:  & \  & (G,H) = \Par{\GramTwo{X}{M^{T}U}, \GramTwo{MX}{U}}                                      \\
                        &                             &    & \text{and } \sigma(M) \in \Lambda                                                     \\
                        & \Rightarrow  \exists X,U,M: &    & c (G,H) = \Par{\GramTwo{\sqrt{c}X}{M^{T} \sqrt{c} U}, \GramTwo{\sqrt{c}MX}{ \sqrt{c}U}} \\
                        &                             &    & \text{and } \sigma(M) \in \Lambda                                                     \\
                        & \Rightarrow c (G,H) \in     &    & \GM{\Lambda}.
\end{alignat*}
and
\begin{align*}
G\in \GM{\Lambda} & \Rightarrow \exists X,U,\text{symmetric }Q: G = \GramFour{X}{QX}{U}{Q^{T}U} \text{ and } \lambda(Q) \in \Lambda                                       \\
                  & \Rightarrow \exists X,U,\text{symmetric }Q: c G = \GramFour{\sqrt{c}X}{Q \sqrt{c} X}{\sqrt{c}U}{Q^{T} \sqrt{c} U} \text{ and } \lambda(Q) \in \Lambda \\
                  & \Rightarrow c G \in \GM{\Lambda}.
\end{align*}

\end{proof}
\end{lemma}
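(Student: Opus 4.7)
The plan is to exhibit cone-membership by rescaling the underlying data vectors rather than the interpolating operator. The decisive observation is the bilinearity of the Gram map: if $A=(a_1\ \cdots\ a_n)$, then $\mathrm{Gram}(\sqrt{c}\,A)=c\,\mathrm{Gram}(A)$. Consequently, multiplying the witness vectors $X,U$ by $\sqrt{c}$ multiplies the associated Gram matrices by exactly $c$, while the interpolating operator $M$ (or symmetric $Q$) is left untouched, so its singular value (or eigenvalue) spectrum remains in $\Lambda$.

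For $c>0$ and $(G,H)\in\GMM{\Lambda}$, I would unpack the definition \eqref{eq:Plambda} to obtain $X,U$ and $M$ with $\sigma(M)\in\Lambda$ satisfying $(G,H)=\bigl(\GramTwo{X}{M^T U},\GramTwo{MX}{U}\bigr)$. Rescaling the data gives
\begin{equation*}
c(G,H)=\bigl(\GramTwo{\sqrt{c}X}{M^T\sqrt{c}U},\GramTwo{M\sqrt{c}X}{\sqrt{c}U}\bigr),
\end{equation*}
which is exactly the form \eqref{eq:Plambda} with witnesses $(\sqrt{c}X,\sqrt{c}U,M)$, so $c(G,H)\in\GMM{\Lambda}$. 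The symmetric case is handled identically using \eqref{eq:Glambda}: from $G=\GramFour{X}{QX}{U}{Q^T U}$ with $\lambda(Q)\in\Lambda$, one obtains $cG=\GramFour{\sqrt{c}X}{Q\sqrt{c}X}{\sqrt{c}U}{Q^T\sqrt{c}U}$, witnessing $cG\in\GM{\Lambda}$.

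The only remaining case is $c=0$, where $c(G,H)=0$ must itself lie in $\GMM{\Lambda}$ (and similarly for $\GM{\Lambda}$). Provided $\Lambda$ is nonempty, I would pick any $M$ (resp.\@ symmetric $Q$) with spectrum in $\Lambda$ and take $X=U=0$ of appropriate dimensions; then $Y=MX=0$ and $V=M^T U=0$, yielding the zero Gram matrix. If $\Lambda=\emptyset$, the sets are empty and the claim is vacuous.

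The argument has essentially no obstacle: the content is entirely that spectra are invariant under rescaling of input vectors, so the hard conceptual work was already done in defining $\GMM{\Lambda}$ and $\GM{\Lambda}$ via the spectrum rather than via the operator. The slight subtlety is keeping track of the transpose $M^T$ on the correct pair of vectors in the rectangular case, which is a matter of bookkeeping rather than mathematics.
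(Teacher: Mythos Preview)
Your proof is correct and follows exactly the same approach as the paper: rescale the witness vectors $X,U$ by $\sqrt{c}$ while leaving the operator $M$ (resp.\ $Q$) unchanged, so that the Gram matrices scale by $c$ and the spectrum constraint is preserved. You even go slightly beyond the paper by explicitly treating the case $c=0$, which the paper's proof omits.
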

\comN{The following lemma formalizes and exploits the fact that when a Gram matrix is associated with a linear operator $M_1$ and vectors $X,Y,U,V$, we can always augment the linear operator $M_1$ as $\vmat{M_1}{0}{0}{M_2}$ without modifying the Gram matrix by augmenting the vectors with zeros, namely, $\vcol{X}{0},\vcol{Y}{0},\vcol{U}{0},\vcol{V}{0}$. Therefore, when a Gram matrix is associated with some spectrum $\lambda_1$ it is also associated with all larger spectrum $\lambda_2$ such that $\lambda_1 \subseteq \lambda_2$.}
%\todo[inline]{TODO6: give some intuition on extension of spectrums}
\begin{lemma}[Extension of spectrums]\label{lem:extens}
Let $\lambda_1$ and $\lambda_2$ be two spectrums . If $\lambda_1 \subseteq \lambda_2$ then $\GMM{\{\lambda_1\}} \subseteq \GMM{\{\lambda_2\}}$ and  $\mathcal{G}_{\{\lambda_1\}} \subseteq \mathcal{G}_{\{\lambda_2\}}$.
\begin{proof}
Let $G\in \GM{\{\lambda_1 \}}$ (resp.\@ $(G,H)\in \GMM{\{\lambda_1\}}$). We have
\begin{align*}
            & G = \GramFour XYUV \text{ (resp.\@ $(G,H) = \Par{\GramTwo XV, \GramTwo YU}$)}                                                                                              \\
            & \text{with }
\begin{cases}
Y & =M_1X    \\
V & =M_1^{T} U
\end{cases}
\text{ and } \lambda(M_1) = \lambda_1 \text{ (resp.\@ $\sigma(M_1) = \lambda_1$)}                                                                                                        \\
  \Rightarrow & G = \GramFour{\vcol{X}{0}}{\vcol{Y}{0}}{\vcol{U}{0}}{\vcol{V}{0}}\\& \text{ (resp.\@ $(G,H) = \Par{\GramTwo{\vcol{X}{0}}{\vcol{V}{0}}, \GramTwo{\vcol{Y}{0}}{\vcol{U}{0}}}$)} \\
            & \text{with } \begin{cases}
                           \vcol{Y}{0}=\vmat{M_1}{0}{0}{M_2}\vcol{X}{0} \\
                           \vcol{V}{0}=\vmat{M_1^{T}}{0}{0}{M_2^{T}}\vcol{U}{0}
                           \end{cases} \!\!\! \text{ and }
\begin{cases}
\lambda(M_1) = \lambda_1              \text{\quad \quad\ \  (resp.\@ $\sigma(M_1) = \lambda_1$)}             \\
\lambda(M_2) = \lambda_2 - \lambda_1  \ \text{(resp.\@ $\sigma(M_2) = \lambda_2 - \lambda_1$)}
\end{cases}                                                                                            \\
\Rightarrow & G \in \GM{\{\lambda_1 + (\lambda_2-\lambda_1)\}} =  \GM{\{\lambda_2\}} \text{ (resp.\@ $(G,H) \in \GMM{\{\lambda_1 + (\lambda_2-\lambda_1)\}} =  \GMM{\{\lambda_2\}} $)}
\end{align*}
since $ \lambda_1 \subseteq \lambda_2$, we have $\lambda_1 + (\lambda_2 - \lambda_1) = \lambda_2$.
\end{proof}
\end{lemma}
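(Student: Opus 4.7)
The plan is to prove both inclusions by a single zero-padding construction: given a Gram representative of spectrum $\lambda_1$, I will augment the underlying operator into a block-diagonal operator whose second block realises the extra multiplicities $\lambda_2 - \lambda_1$, and I will extend the underlying vectors by zero rows so that the Gram matrix does not change while the new operator acquires spectrum exactly $\lambda_2$.

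Concretely, for the symmetric case, start from $G \in \GM{\{\lambda_1\}}$, so there exist $X,Y,U,V$ and a symmetric $M_1$ with $\lambda(M_1)=\lambda_1$ such that $Y=M_1 X$, $V=M_1^T U$, and $G = \GramFour{X}{Y}{U}{V}$. Because $\lambda_1 \subseteq \lambda_2$ (as multisets), the multiset difference $\lambda_2 - \lambda_1$ is a well-defined finite multiset of real numbers; pick any symmetric $M_2$ realising it, for instance a diagonal matrix with those entries. Form the block-diagonal
\[
M \triangleq \vmat{M_1}{0}{0}{M_2},
\]
whose eigenvalue spectrum is the (multiset) union $\lambda(M_1) + \lambda(M_2) = \lambda_1 + (\lambda_2 - \lambda_1) = \lambda_2$. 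Now stack zero rows below $X,Y,U,V$ to match the dimension of $M$; since appending zero rows does not alter any inner product, one has
\[
G = \GramFour{\vcol{X}{0}}{\vcol{Y}{0}}{\vcol{U}{0}}{\vcol{V}{0}},
\quad \vcol{Y}{0} = M \vcol{X}{0},
\quad \vcol{V}{0} = M^T \vcol{U}{0},
\]
so $G \in \GM{\{\lambda_2\}}$, establishing the symmetric inclusion.

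The nonsymmetric case is identical word for word, working with the pair $(G,H) = (\GramTwo{X}{V},\GramTwo{Y}{U})$ and replacing "symmetric $M_2$ with eigenvalue spectrum $\lambda_2 - \lambda_1$" by "rectangular $M_2$ with singular spectrum $\lambda_2 - \lambda_1$", again realisable by a diagonal matrix with nonnegative diagonal. Padding $X,Y,U,V$ with zero rows in the appropriate factor spaces shows $(G,H) \in \GMM{\{\lambda_2\}}$.

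I do not anticipate a real obstacle. The only mildly delicate point is making the bookkeeping of dimensions explicit: in the nonsymmetric case, $M_2$ acts between a space of dimension equal to the number of appended zero rows below $X,V$ and a space of dimension equal to the number appended below $Y,U$, and one must choose these padding sizes consistently with the rectangular shape of the chosen $M_2$. Once the dimensions line up, invariance of Gram matrices under zero-padding and the direct-sum rule for spectra under block-diagonal concatenation close the argument.
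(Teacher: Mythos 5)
Your proposal is correct and uses exactly the same mechanism as the paper's proof: pad $X,Y,U,V$ with zero rows and extend $M_1$ to a block-diagonal operator $\vmat{M_1}{0}{0}{M_2}$ whose second block realises the multiset difference $\lambda_2-\lambda_1$, so the Gram matrix is unchanged while the spectrum becomes $\lambda_2$. No substantive difference between the two arguments.
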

This property also holds for sets of spectrums.
\begin{corollary}\label{cor:5}
Let $\Lambda_1$ and $\Lambda_2$ be sets of spectrums. If for every $\lambda_1 \in \Lambda_1$ there exists $\lambda_2 \in \Lambda_2$ such that $\lambda_1 \subseteq \lambda_2$ then $ \GMM{\Lambda_1} \subseteq \GMM{\Lambda_2} $ and $\GM{\Lambda_1} \subseteq \GM{\Lambda_2}$.
\end{corollary}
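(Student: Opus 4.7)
The plan is to reduce Corollary~\ref{cor:5} directly to the previous Lemma~\ref{lem:extens} (Extension of spectrums) by exploiting the fact that the set of Gram matrices associated with a family of spectrums decomposes as the union of the sets of Gram matrices associated with each individual spectrum. More precisely, inspecting the definitions \eqref{eq:Plambda} and \eqref{eq:Glambda}, one sees immediately that
\[
\GM{\Lambda} \;=\; \bigcup_{\lambda \in \Lambda} \GM{\{\lambda\}}, \qquad \GMM{\Lambda} \;=\; \bigcup_{\lambda \in \Lambda} \GMM{\{\lambda\}},
\]
since membership in $\GM{\Lambda}$ (resp.\@ $\GMM{\Lambda}$) requires exhibiting a single matrix $M$ whose spectrum lies in $\Lambda$, which means it lies in $\{\lambda\}$ for some $\lambda \in \Lambda$.

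First I would fix an arbitrary $G \in \GM{\Lambda_1}$ (the argument for the pair case $(G,H) \in \GMM{\Lambda_1}$ is completely parallel, replacing eigenvalue spectrums with singular value spectrums). By the decomposition above, there exists $\lambda_1 \in \Lambda_1$ such that $G \in \GM{\{\lambda_1\}}$. Next I apply the hypothesis to produce $\lambda_2 \in \Lambda_2$ with $\lambda_1 \subseteq \lambda_2$. Lemma~\ref{lem:extens} then gives $\GM{\{\lambda_1\}} \subseteq \GM{\{\lambda_2\}}$, hence $G \in \GM{\{\lambda_2\}}$. Since $\lambda_2 \in \Lambda_2$, the trivial inclusion $\GM{\{\lambda_2\}} \subseteq \GM{\Lambda_2}$ concludes that $G \in \GM{\Lambda_2}$.

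There is no substantive obstacle here: the work has already been done in Lemma~\ref{lem:extens}, whose proof constructs the required block‑diagonal enlargement $\vmat{M_1}{0}{0}{M_2}$ together with the zero‑padded vectors. The only thing to be careful about is the correct reading of the definitions so that the union decomposition is justified cleanly, and the symmetric bookkeeping between the two statements (one concerning $\GM{\cdot}$ with eigenvalue spectrums, the other $\GMM{\cdot}$ with singular value spectrums). Both can be handled in a single paragraph by writing ``resp.'' throughout.
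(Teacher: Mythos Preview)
Your proposal is correct and matches the paper's intended approach: the corollary is stated immediately after Lemma~\ref{lem:extens} with no separate proof, precisely because it follows by your union decomposition $\GM{\Lambda}=\bigcup_{\lambda\in\Lambda}\GM{\{\lambda\}}$ (resp.\ for $\GMM{}$) together with that lemma.
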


% \todo[inline]{Need the union of $\Lambda$ will become union of $\mathcal{P}_\Lambda$. Which is also used in the proof of the next theroem}
% All symmetric matrices are normal matrices. It occurs that our proof works for the larger class of normal matrices. Therefore, we prove the result for the class of normal matrices.
\begin{theorem}[Theorem \ref{th:limit}]
Let $\mathcal{M}$ be a set of linear operators (resp.\@ symmetric matrices). If $\GMM{\mathcal{M}}$ (resp.\@ $\GM{\mathcal{M}}$)
%(defined in \eqref{eq:Plambda} resp.\@ \eqref{eq:Glambda})
is convex, then
\begin{equation}
\GMM{\mathcal{M}} = \GMM{\Delta(S)} \text{ (resp.\@ $\GM{\mathcal{M}} = \GM{\Delta(T)}$)}
\end{equation}
for some $S\subseteq \R$. % (resp.\@ $T \subseteq \C$).
\begin{proof}
\comN{By Corollary \ref{cor:spectr_chara}, we have $\GMM{\mathcal{M}} = \GMM{\Lambda}$ (resp.\@ $\GM{\mathcal{M}} = \GM{\Lambda}$) where $\Lambda = \{ \sigma(M) \,:\, M \in \mathcal{M} \}$ (resp.\@ $\Lambda = \{ \lambda(M) \,:\, M \in \mathcal{M} \}$).}
We show that $S$ (resp.\@ $T$) is the set of all values appearing in the spectrums of $\Lambda$, namely,  $S_\Lambda  \triangleq \{ \mu : \exists \lambda \in \Lambda \text{ s.t. } \mu \in \lambda  \}$.  Recall that
\begin{align}
\Delta(S_\Lambda) & = \{ \lambda : \forall a\in \lambda,~a\in S_\Lambda \} =\{ \lambda : \forall a\in \lambda,~a \in \lambda_a',~ \lambda_a' \in \Lambda \}
\end{align}
%\todo[inline]{Nizar: I prefer the previous notation $\Delta(S_\Lambda)$ but $\Delta_{S_\Lambda}$ is not okay.}
\comN{is the set of spectrums $\lambda$ for which each element $a$ belongs to some spectrum $\lambda_a'$ of $\Lambda$.} \\

\noindent (\textit{Inclusions $\GMM{\Lambda} \subseteq \GMM{\Delta(S_\Lambda)}$ and $\GM{\Lambda} \subseteq \GM{\Delta(S_\Lambda)}$}) \comN{Let a spectrum $\lambda\in \Lambda$, then clearly all the elements of this spectrum belong to some spectrum of $\Lambda$ and therefore $\lambda \in \Delta(S_\Lambda)$. Thus, we have} $ \Lambda \subseteq \Delta(S_\Lambda)$
%\todo{explain this step (it's easy when people undestand it, but the analysis here invovles several closely related concepts -> better be prudent)}
and therefore $\GMM{\Lambda} \subseteq \GMM{\Delta(S_\Lambda)}$ and $\GM{\Lambda} \subseteq \GM{\Delta(S_\Lambda)}$ by Corollary \ref{cor:5}. \\
%\todo[inline]{recall what is $\GMM{\{\lambda\}}$ (see pdf) }
\noindent (\textit{Inclusions $\GMM{\Delta(S_\Lambda)} \subseteq \GMM\Lambda$ and $\GM{\Delta(S_\Lambda)} \subseteq \GM\Lambda$}) First, we show that $\mathcal{G}_{\{\lambda\}} \subseteq \GM{\Lambda}$ (resp.\@ $\GMM{\{\lambda\}} \subseteq \GMM{\Lambda}$) $\forall \lambda \in \Delta(S_\Lambda)$. \comN{Recall that $\mathcal{G}_{\{\lambda\}}$ (resp.\@ $\GMM{\{\lambda\}}$) is the set of Gram matrices associated with linear operators whose eigenvalue (resp.\@ singular value) spectrum is exactly $\lambda$}.
Let $\lambda \in \Delta(S_\Lambda)$ and $G\in \GM{{\{\lambda\}}}$ (resp.\@ $(G,H)\in \GMM{\{\lambda\}}$). We have $G=\GramFour XYUV$ (resp.\@ $(G,H)=(\GramTwo{X}{V},\GramTwo{Y}{U})$) with $Y = MX$, $V = M^{T} U$ and $a \in \lambda_a',~ \lambda_a' \in \Lambda, ~\forall a \in \lambda(M)$ (resp.\@ $\forall a \in \sigma(M)$). W.l.o.g, let $M$ of dimension $m \times d$ with $d\leq m$ in the rectangle case and of dimension $d \times d$ in the symmetric case. By Lemma \ref{lem:unitary} and eigenvalue (resp.\@ singular value) decomposition of $M$, we can choose $M$ diagonal with its $d$ eigenvalues (resp.\@ singular values) $m_i$ on its diagonal. Therefore, we have
\begin{equation}
(X,Y,U,V) = \Par{\begin{pmatrix}
X_1 \\ \vdots \\ X_d
\end{pmatrix}, \begin{pmatrix}
V_1 \\ \vdots \\ V_{d}
\end{pmatrix}, \begin{pmatrix}
Y_1 \\ \vdots \\ Y_m
\end{pmatrix}, \begin{pmatrix}
U_1 \\ \vdots \\ U_{m}
\end{pmatrix}}
\end{equation}
and
\begin{align*}
 & \begin{pmatrix}
   Y_1 \\ \vdots \\ Y_m
   \end{pmatrix} = \begin{pmatrix}
                   m_1 &        &     \\
                       & \ddots &     \\
                       &        & m_d \\
                   -   & E      & -
                   \end{pmatrix}\begin{pmatrix}
                                X_1 \\ \vdots \\ X_d
                                \end{pmatrix},
\begin{pmatrix}
V_1 \\ \vdots \\ V_d
\end{pmatrix} = \begin{pmatrix}
                m_1 &        &     & |   \\
                    & \ddots &     & E^{T} \\
                    &        & m_d & |
                \end{pmatrix}\begin{pmatrix}
                             U_1 \\ \vdots \\ U_m
                             \end{pmatrix},
\end{align*}
where $m_i \in \lambda_i', \lambda_i' \in \Lambda$, and $E=0$ with the appropriate size. It is equivalent to
\[
\Leftrightarrow Y_i = m_i X_i,  V_i = m_i U_i ~\forall i=1,\ldots,d \text{ and } Y_i = 0 ~\forall i=d+1,\ldots,m, m_i \in \lambda_i', \lambda_i' \in \Lambda
\]
$
\Leftrightarrow \vcol{Y_i}{0} = \overbrace{\vmat{m_i}{0}{0}{\bar{M}_i}}^{M_i} \vcol{X_i}{0},~\vcol{V_i}{0} = \overbrace{\vmat{m_i}{0}{0}{\bar{M}_i^{T}}}^{M_i^{T}} \vcol{U_i}{0},  \lambda(M_i) = \lambda_i' \text{ (resp.\@ }\sigma(M_i) = \lambda_i'\text{)}, \forall \lambda_i' \in \Lambda
$
%\chcomment[id=D]{If we need $V_i=m_i U_i$ then $E=0$, and we should also add $Y_j=0$ when $d<j\le m$ }%
%\todo[inline]{see suggestion in the pdf}
where $\bar{M}_i$ completes the spectrum of $M_i$ such that the spectrum of $M_i$ is exactly $\lambda_i'$. Let $G_i = \GramFour{\vcol{X_i}{0}}{\vcol{Y_i}{0}}{\vcol{U_i}{0}}{\vcol{V_i}{0}} \in \GM{\Lambda}$ (resp.\@ $(G_i,H_i) = (\GramTwo{\vcol{X_i}{0}}{\vcol{V_i}{0}},\GramTwo{\vcol{Y_i}{0}}{\vcol{U_i}{0}} \in \GMM{\Lambda}$). We have $G = \sum_{i=1}^d G_i = \sum_{i=1}^d \frac{1}{d} (dG_i)$ (resp.\@ $(G,H) = \sum_{i=1}^d (G_i,H_i) = \sum_{i=1}^d \frac{1}{d} (d(G_i,H_i))$ since $Y_i=0$ for $i= d+1, \ldots, m$.) and $dG_i \in \GM{\Lambda}$ (resp.\@ $d(G_i,H_i) \in \GMM{\Lambda}$) by Lemma \ref{lem:cone_only}. And by convexity of $\GM{\Lambda}$ (resp.\@ $\GMM{\Lambda}$), we have $G = \sum_{i=1}^d \frac{1}{d} (dG_i) \in \GM{\Lambda}$ (resp.\@ $(G,H) = \sum_{i=1}^d \frac{1}{d} (d(G_i,H_i)) \in \GMM{\Lambda}$).
\\
\noindent Finally, let $G \in \GM{\Delta(S_\Lambda)}$ (resp.\@ $(G,H) \in \GMM{\Delta(S_\Lambda)}$).  By definition of $\Delta(S_\Lambda)$, there is a $\lambda \in \Delta(S_\Lambda)$ such that $G \in \GM {\{\lambda\}}$ (resp.\@ $(G,H) \in \GMM{\{\lambda\}}$) and therefore $G\in \GM{\Lambda}$ (resp.\@ $(G,H) \in \GMM{\Lambda}$).
\end{proof}
\end{theorem}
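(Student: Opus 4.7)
The strategy is to exhibit an explicit candidate subset $S\subseteq \R$ in terms of $\mathcal{M}$, and then prove the two inclusions separately. By Corollary~\ref{cor:spectr_chara}, it is harmless to replace $\mathcal{M}$ by the set of spectrums $\Lambda = \{\sigma(M):M\in\mathcal{M}\}$ (resp.\@ $\Lambda = \{\lambda(M):M\in\mathcal{M}\}$), since $\GMM{\mathcal{M}}=\GMM{\Lambda}$ (resp.\@ $\GM{\mathcal{M}}=\GM{\Lambda}$). The natural candidate is
\[
S_\Lambda \;\triangleq\; \{\,a\in\R \,:\, \exists \lambda\in\Lambda \text{ with } a\in\lambda\,\},
\]
i.e.\@ the union of values that appear in at least one spectrum of $\Lambda$.

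The easy inclusion $\GMM{\Lambda}\subseteq \GMM{\Delta(S_\Lambda)}$ (resp.\@ $\GM{\Lambda}\subseteq \GM{\Delta(S_\Lambda)}$) comes for free: every spectrum in $\Lambda$ has all its elements in $S_\Lambda$ by definition, so $\Lambda \subseteq \Delta(S_\Lambda)$, and the monotonicity statement Corollary~\ref{cor:5} immediately gives the desired Gram-set inclusion.

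The substantive step is the reverse inclusion. First I would establish the pointwise fact that $\GM{\{\lambda\}}\subseteq \GM{\Lambda}$ (resp.\@ $\GMM{\{\lambda\}}\subseteq \GMM{\Lambda}$) for every $\lambda\in\Delta(S_\Lambda)$. Fix such a $\lambda$ and a Gram matrix $G$ associated with an operator $M$ having spectrum $\lambda$. Using unitary invariance (Lemma~\ref{lem:unitary}), I can assume $M$ is diagonal, so its action decouples across coordinates: the $i$-th coordinate contributes $(X_i,Y_i,U_i,V_i)$ with $Y_i = m_i X_i$, $V_i = m_i U_i$, and $m_i\in S_\Lambda$. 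By the definition of $S_\Lambda$, each $m_i$ belongs to some $\lambda_i'\in\Lambda$, so Lemma~\ref{lem:extens} allows me to augment the single-value block at coordinate $i$ by zeros into a Gram matrix $G_i$ associated with an operator whose full spectrum is exactly $\lambda_i'$, giving $G_i\in \GM{\Lambda}$; scaling by $d$ via the cone Lemma~\ref{lem:cone_only} yields $dG_i\in\GM{\Lambda}$. Writing $G=\sum_{i=1}^d G_i = \sum_{i=1}^d \tfrac{1}{d}\,(dG_i)$ as a convex combination and invoking the hypothesised convexity of $\GM{\Lambda}$ gives $G\in\GM{\Lambda}$. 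Finally, any element of $\GM{\Delta(S_\Lambda)}$ lies in $\GM{\{\lambda\}}$ for some $\lambda\in\Delta(S_\Lambda)$, which completes the reverse inclusion.

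The main obstacle I anticipate is the bookkeeping in the rectangular (non-symmetric) case: diagonalising $M\in\R^{m\times d}$ leaves a zero-block of rows (or columns), so the coordinate-wise splitting has to track the $(X,V)$ and $(Y,U)$ sides separately, and the auxiliary operator used to complete the spectrum of the $i$-th block must be placed in the right ambient dimension. Conceptually nothing changes, but the indexing is where an error is most likely. Convexity of $\GMM{\Lambda}$ (resp.\@ $\GM{\Lambda}$) is used exactly once, at the very last step, when the coordinate contributions are averaged together.
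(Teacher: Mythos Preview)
Your proposal is correct and follows essentially the same approach as the paper's proof: define $S_\Lambda$ as the union of all spectrum values, get the easy inclusion from $\Lambda\subseteq\Delta(S_\Lambda)$ via Corollary~\ref{cor:5}, and for the reverse inclusion diagonalise $M$ via Lemma~\ref{lem:unitary}, split $G$ coordinate-wise, pad each rank-one piece to a full spectrum $\lambda_i'\in\Lambda$ using Lemma~\ref{lem:extens}, scale with Lemma~\ref{lem:cone_only}, and average using convexity. Your remark about bookkeeping in the rectangular case is also exactly where the paper spends its care.
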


\nocite{chambolle2011first}

\bmhead{Acknowledgements}
Nizar Bousselmi is an FRIA grantee of the Fonds de la Recherche Scientifique - FNRS.
Zhicheng Deng is sponsored by China Scholarship Council.

\section*{Delcarations}

\bmhead{Data Availibility Statement}
The datasets generated during the current study are available from the corresponding author on reasonable request.

\bmhead{Conflict of interest} This research was conducted in full compliance with ethical standards, with no Conflict
of interest, no involvement of human participants or animals, and no requirement for informed consent.

\bibliography{sn-bibliography}% common bib file
%% if required, the content of .bbl file can be included here once bbl is generated
%%\input sn-article.bbl

\end{document}